\documentclass{article}
\usepackage{amsmath}
\usepackage{amssymb}
\usepackage{amsthm}
\usepackage{amsfonts}
\usepackage{diagbox}
\newtheorem{theorem}{Theorem}[section]

\newtheorem{lemma}[theorem]{Lemma}
\newtheorem{corollary}[theorem]{Corollary}

\newtheorem{definition}[theorem]{Definition}

\title{Revisiting the universal linear algebraic model for the characteristic two case}
\author{M\'at\'e L. Juh\'asz\\[2mm]
Eszterh\'azy K\'aroly University\\
\texttt{juhasz.mate@uni-eszterhazy.hu}
}

\DeclareMathOperator{\Ker}{Ker}

\DeclareMathOperator{\Iso}{Iso}

\DeclareMathOperator{\Ort}{Ort}

\DeclareMathOperator{\Rad}{Rad}

\DeclareMathOperator{\Arf}{Arf}

\DeclareMathOperator{\cotr}{cotr}

\begin{document}

\maketitle

\begin{abstract}
In \cite{juhasz2016univ}, a universal linear algebraic model was proposed for describing homogeneous conformal geometries, such as the spherical, Euclidean, hyperbolic, Minkowski, anti-de Sitter and Galilei planes (\cite{yaglom1969}). This formalism was independent from the underlying field, providing an extension and general approach to other fields, such as finite fields. Some steps were taken even for the characteristic $2$ case.

In this article, we undertake the study of the characteristic $2$ case in more detail. In particular, the concept of virtual quadratic spaces is used (\cite{juhasz2016isom}), and a similar result is achieved for finite fields of characteristic $2$ as for other fields. Some differences from the non-characteristic $2$ case are also pointed out.
\end{abstract}

\section{Introduction}

In \cite{juhasz2016univ}, a universal linear algebraic model was proposed for describing homogeneous conformal geometries.
In particular, it provided a uniform description of spherical, Euclidean, hyperbolic geometries, as well as Minkowski's, anti-de Sitter and the Galilei plane (\cite{yaglom1969}), using quadratic forms on vector spaces.
The model also gave a generalization of pseudo-Riemannian manifolds for arbitrary fields.
As is usual with quadratic forms, the characteristic $2$ case was mostly excluded from the article.

\section{Preliminaries}

Let us fix some field and denote it by $\mathbb{K}$.

\begin{definition}
A \textbf{quadratic space} is a pair $(V,Q)$ consisting of a vector space and a quadratic form on it. It has an \textbf{associated bilinear from} defined as $B(x,y)=Q(x+y)-Q(x)-Q(y)$. The \textbf{radical} of a quadratic form $Q$ is the set of vectors $v$ such that $Q(v+u)=Q(u)$, and it is denoted by $\Rad Q$. The \textbf{direct sum} of two quadratic spaces $(V,Q)$ and $(V',Q')$ is $(V\oplus V',Q\oplus Q')$ such that $(Q\oplus Q')(v\oplus v')=Q(v)+Q(v')$. An \textbf{isometry} is a linear map $\varphi\colon V\to V$ such that $Q(\varphi(v))=Q(v)$ for all $v\in V$. The \textbf{group of isometries} is denoted by $\Iso(V)$.
\end{definition}

\begin{definition}
Let us denote the function $u\to B(v,u)$ by $v^*$. A bilinear form is \textbf{non-degenerate} or \textbf{regular} if for every non-zero vector $v\in V$, the linear function $v^*$ is non-zero. A quadratic form is \textbf{non-degenerate} if its associated bilinear form is non-degenerate.
\end{definition}

\begin{definition}
A \textbf{hyperbolic space} of dimension $2$ is a pair $(\Sigma,B)$ is such that the bilinear form $B$ takes the form $B(u,v)=u_1v_2+u_2v_1$ in some basis. A hyperbolic space of dimension $2k$ is the orthogonal direct sum of $k$ hyperbolic spaces of dimension $2$ each.
\end{definition}

\begin{theorem}
Given two subspaces $U$, $V\le W$ of the same dimension such that neither of them contains degenerate vectors, any isometry between $U$ and $V$ extends into an isometry of $W$. (See \cite{kitaoka1993}, \textbf{Corollary 1.2.1.})
\end{theorem}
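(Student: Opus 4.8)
The hypothesis that a subspace "contains no degenerate vectors" I read as saying that the restriction of $Q$ to it has trivial radical, i.e. that $U$ and $V$ are regular subspaces. In characteristic $2$ this does \emph{not} force the restricted bilinear form to be nondegenerate, since $B$ is alternating and $B(u,u)=0$ for every $u$; it only forbids a nonzero $u$ with both $Q(u)=0$ and $u\in\Rad(B|_U)$. The plan is an induction on $d=\dim U=\dim V$ whose elementary building blocks are the symmetries
\[
\tau_v(x)=x-\frac{B(v,x)}{Q(v)}\,v,\qquad Q(v)\neq 0 .
\]
Substituting $x-\frac{B(v,x)}{Q(v)}v$ into $Q(x+\lambda v)=Q(x)+\lambda B(v,x)+\lambda^2 Q(v)$ shows $\tau_v\in\Iso(W)$ in every characteristic, and this is the first fact I would record.

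For the base case $d=1$ write $U=\langle x\rangle$, $V=\langle y\rangle$ with $Q(x)=Q(y)\neq 0$, and seek $\phi\in\Iso(W)$ with $\phi(x)=y$. From $Q(x\pm y)=2Q(x)\pm B(x,y)$ one checks that $\tau_{x-y}$ sends $x$ to $y$ whenever $Q(x-y)\neq 0$, and that $(-\id_W)\circ\tau_{x+y}$ does so whenever $Q(x+y)\neq 0$. Outside characteristic $2$ the identity $Q(x-y)+Q(x+y)=4Q(x)\neq 0$ makes one of these available. In characteristic $2$ the shortcut degenerates: $x-y=x+y$ and $Q(x+y)=B(x,y)$, so a single symmetry works precisely when $B(x,y)\neq 0$, and the exceptional case is $B(x,y)=0$.

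To handle the exceptional case I would interpose a vector: find $y'\in W$ with $Q(y')=Q(x)$, $B(x,y')\neq 0$, $B(y',y)\neq 0$, and take $\phi=\tau_{y'+y}\circ\tau_{x+y'}$, which carries $x\mapsto y'\mapsto y$. The crux is the existence of $y'$, which amounts to finding a point of the quadric $\{Q=Q(x)\}$ lying off the two hyperplanes $x^\perp$ and $y^\perp$. This is exactly where the ground field intervenes: over large or infinite fields there is ample room, but over small finite fields of characteristic $2$ one must argue that the quadric is not swallowed by the union of the two hyperplanes — presumably the point where the paper's finite-field hypotheses enter. A genuinely obstructed sub-case also appears here, namely $x$ or $y$ in $\Rad B$: then no symmetry can move that vector, so this case must be treated separately.

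For the inductive step ($d\ge 2$) I would, when possible, choose $u\in U$ with $Q(u)\neq 0$ and $u\notin\Rad(B|_U)$, fix it by the base case (replacing $\sigma$ by $\psi^{-1}\sigma$, where $\psi(u)=\sigma(u)$), and descend to the $(d-1)$-dimensional isometry $U\cap u^\perp\to V\cap u^\perp$ inside the smaller space $u^\perp$, to which the induction hypothesis applies. The obstruction, invisible outside characteristic $2$, is that such a $u$ need not exist: it can happen that $B|_U\equiv 0$ while $Q|_U$ is anisotropic (a totally singular, or quasilinear, subspace), in which case $u^\perp\supseteq U$ and no dimension drop occurs. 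I expect this quasilinear situation to be the main obstacle, requiring its own treatment via the Frobenius-semilinear structure of $Q$ on such a subspace rather than via symmetries. Keeping $\Rad B$ and $\Rad Q$ distinct throughout, and recombining the regular and quasilinear parts, is what makes the characteristic $2$ argument go through and is the essential departure from the classical reflection-based proof cited from \cite{kitaoka1993}.
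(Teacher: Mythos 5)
First, note that the paper itself contains no proof of this theorem: it is imported verbatim from Kitaoka (Corollary 1.2.1) and used as a black box, so there is no internal argument to measure yours against. Taken on its own terms, the pieces of your sketch that you actually carry out are correct --- $\tau_v$ is an isometry in every characteristic, and in characteristic $2$ the single symmetry $\tau_{x+y}$ does send $x$ to $y$ whenever $Q(x)=Q(y)\ne 0$ and $B(x,y)\ne 0$. But the three cases you flag with ``presumably'', ``I expect'' and ``must be treated separately'' --- the existence of the bridging vector $y'$ over small fields, a generator lying in $\Rad B$, and the totally singular inductive step --- are precisely the hard cases, and none of them is closed. As it stands this is an outline of the classical Witt reflection argument together with a list of the places where it breaks in characteristic $2$, not a proof.

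The deeper problem is your reading of the hypothesis. If ``contains no degenerate vectors'' is taken to mean only that $Q$ restricted to the subspace has trivial radical, the statement becomes false whenever the ambient bilinear form is allowed to be degenerate: over $\mathbb{F}_2$ let $W=\mathbb{F}_2^3$ with $Q(a,b,c)=ab+c^2$ (so $\Rad Q=\{0\}$ but the radical of $B$ is $\langle(0,0,1)\rangle$), and take $U=\langle(0,0,1)\rangle$, $V=\langle(1,1,0)\rangle$; both lines are anisotropic and $Q$ equals $1$ on both generators, yet every isometry of $W$ preserves the radical of $B$ and therefore fixes $(0,0,1)$, so the obvious isometry $U\to V$ does not extend. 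Hence ``degenerate vector'' must be read relative to the bilinear form --- a nonzero $v$ in the subspace with $B(v,\cdot)$ vanishing on that subspace --- i.e.\ the hypothesis is that $B|_U$ and $B|_V$ are regular. Under that reading your two problematic branches (a generator in the radical of $B$, and the quasilinear case $B|_U\equiv 0$) are excluded by hypothesis rather than in need of a Frobenius-semilinear treatment; and since $B$ is alternating in characteristic $2$, such subspaces are forced to be even-dimensional, so the $d=1$ base case on which your induction rests never occurs there. The induction that actually works in characteristic $2$ splits $U$ into two-dimensional $B$-regular planes and extends plane by plane, using the symmetries only inside configurations where they are always available.
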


Let us turn towards the characteristic $2$ case. First, let us recall the definition of a perfect fields.

\begin{definition}
A field $\mathbb{F}$ is \textbf{perfect} if for any element in any finite field extension of $\mathbb{F}$, it is a simple root of its minimal polynomial.
\end{definition}

\begin{theorem}
Every finite field is perfect.
\end{theorem}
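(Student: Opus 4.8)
The plan is to reduce the statement to the standard fact that in characteristic $p$ an irreducible polynomial fails to have simple roots precisely when it is a polynomial in $x^p$, and then to exploit the surjectivity of the Frobenius endomorphism that finiteness provides.

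First I would observe that a finite field $\mathbb{F}$ has prime characteristic $p$, and that the Frobenius map $\phi\colon x\mapsto x^p$ is a field homomorphism, hence injective. Since $\mathbb{F}$ is finite, injectivity forces surjectivity, so every element of $\mathbb{F}$ admits a $p$-th root in $\mathbb{F}$. This is the only place where the hypothesis that $\mathbb{F}$ is finite (rather than merely of characteristic $p$) is genuinely used.

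Next I would fix a finite extension $\mathbb{E}$ of $\mathbb{F}$, an element $\alpha\in\mathbb{E}$, and let $m\in\mathbb{F}[x]$ be its minimal polynomial. Since $m$ is irreducible, $\alpha$ is a simple root of $m$ if and only if $m$ and its formal derivative $m'$ share no root; and because $m$ is irreducible (so $\gcd(m,m')$ is either $1$ or $m$, and the latter forces $m\mid m'$, impossible for $m'\neq 0$ by degrees), this is equivalent to $m'\neq 0$ as a polynomial. Thus it suffices to rule out the possibility $m'=0$.

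The key step, which I expect to carry the weight of the argument, is to show that $m'=0$ is impossible. If $m'=0$, then in characteristic $p$ the derivative of $x^n$ vanishes exactly when $p\mid n$, so only exponents divisible by $p$ can appear and $m(x)=\sum_i a_i x^{pi}$ with $a_i\in\mathbb{F}$. Using the surjectivity of Frobenius established above, I would write each $a_i=b_i^p$; then the identity $\bigl(\sum_i b_i x^i\bigr)^p=\sum_i b_i^p x^{pi}=m(x)$ exhibits $m$ as a proper $p$-th power in $\mathbb{F}[x]$, contradicting its irreducibility (since $\deg m\geq 1$). Hence $m'\neq 0$, so $\alpha$ is a simple root of its minimal polynomial, and $\mathbb{F}$ is perfect.
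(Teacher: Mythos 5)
Your argument is correct and complete: Frobenius is injective on any field of characteristic $p$ and hence surjective on a finite one, and an irreducible polynomial with vanishing derivative would then be a $p$-th power, which is absurd; this establishes exactly the property in the paper's definition of perfectness. Note, however, that the paper offers no proof of this theorem at all — it is quoted as a standard fact — so there is no argument in the text to compare yours against; what you have written is the textbook proof and would serve as a valid substitute for the omitted one.
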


We will not need all the properties of perfect fields, only the following:

\begin{lemma}
If the characteristic of a field is $p$, polynomials of the form $x^p-a$ have a single solution.

In a perfect field $\mathbb{K}$ of characteristic $2$, the equation $x^2=a$ has a unique solution in $x$ for any $a\in\mathbb{K}$. Alternatively, the map $x\to x^2$ is a bijection and an automorphism of the additive group $\mathbb{K}^+$.
\end{lemma}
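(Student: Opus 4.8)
The plan is to treat the three assertions in turn, extracting uniqueness from the Frobenius identity and existence from the definition of a perfect field. First I would record the ``freshman's dream'' identity $(x-y)^p = x^p - y^p$, valid in any field of characteristic $p$ because every intermediate binomial coefficient $\binom{p}{k}$ with $0 < k < p$ is divisible by $p$ and hence vanishes. This identity immediately yields uniqueness for the first, general claim: if $x^p = a = y^p$, then $(x-y)^p = x^p - y^p = 0$, and since a field has no zero divisors this forces $x = y$. Thus $x^p - a$ admits at most one root, proving the first sentence.

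For the characteristic $2$ statements I would first dispatch the additive behaviour. Setting $p = 2$, the same identity (now $(x+y)^2 = x^2 + y^2$, since $2xy = 0$) shows that the squaring map $\varphi\colon x \mapsto x^2$ is a homomorphism of the additive group $\mathbb{K}^+$, and the uniqueness argument above shows it is injective. It therefore remains only to prove surjectivity, i.e.\ that every $a \in \mathbb{K}$ is a square. This is the one place where perfectness is genuinely used, and I expect it to be the main obstacle, since the paper's definition of ``perfect'' is phrased in terms of separability rather than the surjectivity of the Frobenius map, so the two must be reconciled.

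To bridge this gap I would argue as follows. Fix $a \in \mathbb{K}$ and let $b$ be a root of $x^2 - a$ in some extension of $\mathbb{K}$. By the freshman's dream, $x^2 - a = (x - b)^2$ over $\mathbb{K}(b)$, so the minimal polynomial $m(x) \in \mathbb{K}[x]$ of $b$ divides $(x-b)^2$ and is therefore either $x - b$ or $(x-b)^2$. In the latter case $b$ would be a root of multiplicity $2$ of its own minimal polynomial, contradicting the assumption that $\mathbb{K}$ is perfect, which requires every element of a finite extension to be a simple root of its minimal polynomial. Hence $m(x) = x - b$, so $b \in \mathbb{K}$ and $a = b^2$ is a square; this establishes surjectivity.

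Combining the three parts, $\varphi$ is an injective and surjective additive homomorphism, hence a bijection and an automorphism of $\mathbb{K}^+$, and in particular $x^2 = a$ has a unique solution for every $a$, completing the proof.
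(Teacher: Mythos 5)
Your proposal is correct and complete. The paper itself offers no proof of this lemma (it is stated as background, immediately after the remark that only this consequence of perfectness will be needed), so there is no argument to compare against; but your write-up supplies exactly the missing content. The two substantive points are both handled properly: you correctly read the first sentence as a uniqueness claim (at most one root, via $(x-y)^p = x^p - y^p$), which is the right reading since existence can fail in a non-perfect field of characteristic $p$ such as $\mathbb{F}_p(t)$; and you correctly bridge the gap between the paper's separability-phrased definition of perfectness and the surjectivity of the Frobenius map, by observing that a root $b$ of $x^2-a$ in an extension would otherwise be a double root of its own minimal polynomial. That reconciliation is the only non-mechanical step, and you identified and executed it.
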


Over a perfect field of characteristic $2$, polynomials of the form $x^2-a$ are always reducible, and the irreducible quadratic polynomials will take the form $x^2+x+a$ instead.

The main issue with the characteristic $2$ case is that in some dimensions there is no non-degenerate quadratic form. For example, for $\mathbb{K}=\mathbb{F}_2$, there are non-degenerate quadratic forms only in even dimensions. However, by embedding an odd dimensional vector space inside an even dimensional quadratic space, we may salvage certain properties of quadratic spaces in the odd dimensional case as well.

Let us recall virtual quadratic spaces from \cite{juhasz2016isom} and a few related definitions.

\begin{definition}
A \textbf{virtual quadratic space} is a tuple $(V,Q,U)$ or $(V,U)$ for short, with $U$ a subspace of a vector space $V$ and $Q$ a non-degenerate quadratic form on $V$. Its \textbf{dimension} is $\dim U$. An \textbf{isometry} of the virtual quadratic space is an isometry of $(V,Q)$ that fixes $U^\perp$. The group of isometries is denoted by $\Iso(V,U)$.

Two virtual quadratic spaces $(V,Q,U)$ and $(V',Q',U')$ are \textbf{isomorphic} if $(U,Q|_U)$ and $(U',Q|_{U'})$ are isomorphic quadratic spaces.

A virtual quadratic space $(V,U)$ is \textbf{non-degenerate} if $\Rad Q=\{0\}$.
\end{definition}

The main result was the following:

\begin{theorem}\label{Tmain}
Consider a quadratic space $(Q_U,U)$. Then it can be embedded into a virtual quadratic space $(V,Q_V,U)$, and for such an embedding, $\Iso(V,U)$ depends only on $Q_U$. In fact, such a $V$ can always be chosen so that $\dim V=\dim U+\dim(U\cap U^\perp)$, even as a subspace of some other virtual quadratic space $(V',Q_{V'},U)$, which is equivalent to the condition that $U^\perp\subseteq U$ in $V$. Furthermore, if $\Rad Q_U=\{0\}$, the restriction map $\Iso(V,U)\to\Iso(U)$ is a surjective map.
\end{theorem}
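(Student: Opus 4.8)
The plan is to build the embedding explicitly from the radical of the bilinear form, to reduce an arbitrary embedding to this canonical one, and finally to read off surjectivity from the behaviour of $Q_U$ on that radical. Set $R := U\cap U^\perp$, which is intrinsic to $(U,Q_U)$ since it is the radical of $B|_U$; write $m=\dim R$. Because $B$ is alternating in characteristic $2$, we have $B|_R=0$, so $Q_U|_R$ is additive and satisfies $Q_U(\lambda r)=\lambda^2 Q_U(r)$, i.e. it is semilinear along the Frobenius. Choosing a complement $U=U_0\oplus R$ on which $B|_{U_0}$ is non-degenerate, I would adjoin to a basis $r_1,\dots,r_m$ of $R$ fresh vectors $s_1,\dots,s_m$ with $B(r_i,s_j)=\delta_{ij}$, $B(s_i,s_j)=0$, $B(s_i,U_0)=0$ and, say, $Q_V(s_i)=0$. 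Each plane $\langle r_i,s_i\rangle$ is then non-degenerate, so $V:=U_0\perp\langle r_1,s_1\rangle\perp\cdots\perp\langle r_m,s_m\rangle$ is a non-degenerate quadratic space with $Q_V|_U=Q_U$; here $U^\perp=R\subseteq U$ and $\dim V=\dim U+m=\dim U+\dim(U\cap U^\perp)$. This settles existence together with the minimal dimension, and it exhibits the equivalence ``minimal dimension $\iff U^\perp\subseteq U$'', both conditions amounting to $\dim U^\perp=\dim(U\cap U^\perp)$.

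For the independence of $\Iso(V,U)$ from the embedding, the key reduction is a \textbf{stripping lemma}: if $N\le U^\perp$ is a non-degenerate subspace, then every element of $\Iso(V,U)$ fixes $N$ pointwise (since $N\subseteq U^\perp$) and hence preserves $N^\perp\supseteq U$, so that restriction gives an isomorphism $\Iso(V,U)\cong\Iso(N^\perp,U)$. Applying this to a non-degenerate complement of $R$ inside $U^\perp$ (which exists because $R$ is exactly the radical of $B|_{U^\perp}$) reduces any embedding to one of minimal dimension, of dimension $\dim U+m$. It then remains to compare two minimal embeddings $V_1,V_2$: splitting off the common non-degenerate summand $U_0$, one is left with two $2m$-dimensional non-degenerate completions $H_1,H_2$ of the semilinear form $Q_U|_R$, each containing $R$ as a Lagrangian with prescribed restriction $Q|_R$, and the question is whether they are isometric by a map fixing $R$.

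I expect this last comparison to be the \emph{main obstacle}, since it is exactly where the characteristic $2$ theory genuinely differs. It amounts to showing that the isometry type of a quadratic refinement of a symplectic space, relative to a fixed Lagrangian, is determined by its restriction to that Lagrangian; over a perfect field I would establish this by an explicit transvection argument, letting the isometries that fix $R$ act on the choices of dual vectors $s_i$ and absorbing the discrepancy in the values $Q(s_i)$. Should the completions fail to be strictly isometric, it would still suffice to show that the two groups $\Iso(H_i,R)$ coincide, but the cleaner route is the relative isometry. Once this is in hand, the stripping lemma yields $\Iso(V,U)\cong\Iso(V_{\min},U)$ for a canonical minimal $V_{\min}$, proving the group depends only on $Q_U$, and the same lemma produces the required embedding inside any prescribed ambient $(V',Q_{V'},U)$ by deleting a non-degenerate summand of $U^\perp$.

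Finally, for surjectivity of $\Iso(V,U)\to\Iso(U)$ under the hypothesis $\Rad Q_U=\{0\}$, I would first record the clean consequence that this hypothesis makes $Q_U|_R$ anisotropic, hence additive and injective on $R$; then for any $\psi\in\Iso(U)$ and $r\in R$ one computes $Q_U(\psi(r)+r)=Q_U(\psi(r))+Q_U(r)=0$ using $B|_R=0$ and $\charop\mathbb{K}=2$, which forces $\psi(r)=r$. Thus every isometry of $U$ already fixes $R$ pointwise. Working in a minimal embedding, $U$ contains no degenerate vectors (this is precisely $\Rad Q_U=\{0\}$), so the extension theorem of Kitaoka extends $\psi$ to an isometry $\tilde\psi$ of $V$; as $\tilde\psi|_U=\psi$ restricts to the identity on $R=U^\perp$, we obtain $\tilde\psi\in\Iso(V,U)$ with $\tilde\psi|_U=\psi$. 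For a general embedding I would first pass to the minimal one via the stripping lemma and extend by the identity on the removed summand, which is legitimate because that summand lies in $U^\perp$. This gives surjectivity and completes the argument.
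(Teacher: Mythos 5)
You should first be aware that the paper does not prove Theorem \ref{Tmain}: it is quoted as the main result of \cite{juhasz2016isom}, so there is no internal proof to compare against and I can only judge your argument on its own merits. Your treatment of existence, of the minimal dimension $\dim U+\dim(U\cap U^\perp)$ and its equivalence with $U^\perp\subseteq U$, the stripping lemma reducing an arbitrary embedding to a minimal one inside a prescribed $(V',Q_{V'},U)$, and the surjectivity of $\Iso(V,U)\to\Iso(U)$ when $\Rad Q_U=\{0\}$ (via $\psi(r)+r\in\Rad Q_U$ and the Witt extension theorem) are all sound.

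The genuine gap is in the step you yourself flag as the main obstacle, and your preferred route through it is not merely hard but false. Two minimal non-degenerate completions of $(U,Q_U)$ need not be isometric relative to $U$, nor even abstractly isometric: take $U=\langle r\rangle$ with $Q(r)=1$; then $\langle r,s\rangle$ with $B(r,s)=1$ and $Q(s)=0$ is hyperbolic, while $Q(s)=\mathbf{e}\notin\mathfrak{H}(\mathbb{K})$ gives the anisotropic plane, and the two have different Arf invariants. Concretely, ``absorbing the discrepancy in the values $Q(s_i)$'' by $s\mapsto s+ar$ requires solving $\mathfrak{H}\bigl(aQ(r)\bigr)=Q(r)\bigl(Q(s^{(1)})+Q(s^{(2)})\bigr)$, which fails for half of all discrepancies over a finite field. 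So the assertion that a quadratic refinement of a symplectic space is determined, up to isometry fixing a Lagrangian, by its restriction to that Lagrangian is wrong, and your fallback --- showing the two groups coincide anyway --- is the actual content of this part of the theorem, which the proposal leaves unproved; moreover the reduction of that fallback to comparing $\Iso(H_i,R)$ by ``splitting off $U_0$'' is itself unjustified, since elements of $\Iso(V,U)$ need not preserve the chosen complement $U_0$. The repair is short but different in spirit: every $M\in\Iso(V,U)$ fixes $R=U^\perp$ pointwise, so the coordinate functionals $x\mapsto B(x,r_j)$ extracting the $s_j$-components are $M$-invariant; hence changing the values $Q(s_j)$ alters $Q$ only by the $M$-invariant summand $\sum_j\delta_j B(x,r_j)^2$, and the isometry groups of the two completions coincide literally as matrix groups, with no relative isometry of the ambient spaces needed.
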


\begin{lemma}
\label{Tvirtual2}
In a perfect field $\mathbb{K}$ of characteristic $2$, any non-degenerate virtual quadratic space $(V,U)$ is such that $\dim(U\cap U^\perp)\le 1$.
\end{lemma}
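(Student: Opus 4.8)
The plan is to analyze the restriction of the quadratic form to $W:=U\cap U^\perp$, which is exactly the radical of $B$ restricted to $U$ and on which $B$ therefore vanishes identically. First I would record the elementary consequence of characteristic $2$ that, since $B(w,w')=0$ for all $w,w'\in W$, the polarization identity $Q(w+w')=Q(w)+Q(w')+B(w,w')$ collapses to $Q(w+w')=Q(w)+Q(w')$; together with $Q(\lambda w)=\lambda^2 Q(w)$ this says that $Q|_W$ is additive and homogeneous with respect to the Frobenius map $\lambda\mapsto\lambda^2$. So on $W$ the form behaves like a semilinear map to $\mathbb{K}$ rather than a genuine quadratic form, and the bilinear form carries no information there.

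Next I would translate the non-degeneracy hypothesis into a statement purely about $W$. Because $B$ vanishes on $W$, a nonzero $w\in W$ is a degenerate vector of the underlying quadratic space $(U,Q|_U)$ exactly when $Q(w)=0$; hence the radical that the non-degeneracy assumption eliminates is precisely the isotropic part of $W$. Non-degeneracy thus yields $Q(w)\neq 0$ for every nonzero $w\in W$.

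The key step, where perfectness of the field enters, is to linearize $Q|_W$. Because $\mathbb{K}$ is perfect of characteristic $2$, the squaring map is a bijective automorphism of $\mathbb{K}^+$, so every element has a unique square root, and from $(\sqrt{a}+\sqrt{b})^2=a+b$ that square root is additive. Define $\psi\colon W\to\mathbb{K}$ by $\psi(w)=\sqrt{Q(w)}$. Then $\psi(w+w')=\psi(w)+\psi(w')$ and $\psi(\lambda w)=\lambda\psi(w)$, so $\psi$ is an honest $\mathbb{K}$-linear functional on $W$, whose kernel is $\{w\in W:Q(w)=0\}=\{0\}$ by the previous paragraph.

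Finally, a linear functional with trivial kernel embeds $W$ into the one-dimensional space $\mathbb{K}$, forcing $\dim W=\dim(U\cap U^\perp)\le 1$, as claimed. The main obstacle is the middle maneuver: the quadratic non-degeneracy is, in characteristic $2$, not detected by the bilinear form on $W$ (where $B$ is useless), yet it can still be converted into the injectivity of a genuine linear map by extracting square roots. This is exactly where perfectness of the field is indispensable, and it is also what makes the bound $\le 1$ (rather than $=0$) the natural output: a linear functional may have a one-dimensional, kernel-free domain.
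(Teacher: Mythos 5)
Your proof is correct. Note that the paper itself gives no proof of this lemma: it is recalled from the cited reference on virtual quadratic spaces, so there is nothing to compare against line by line. Your argument is the natural one: on $W=U\cap U^\perp$ the bilinear form vanishes, so non-degeneracy of $(V,U)$ (i.e.\ $\Rad Q|_U=\{0\}$, which is the intended reading, since $B$ is already non-degenerate on all of $V$) forces $Q$ to be anisotropic on $W$; perfectness makes $w\mapsto Q(w)^{1/2}$ a genuine injective $\mathbb{K}$-linear functional on $W$, whence $\dim W\le 1$. You correctly identify perfectness as the indispensable ingredient and why the bound is $\le 1$ rather than $0$.
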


\begin{corollary}
In a perfect field $\mathbb{K}$ of characteristic $2$, a non-degenerate virtual quadratic space is always of the form $(V,V)$ if $2\mid\dim V$ or $(V,\Omega^\perp)$ for some vector $\Omega\in V$ if $2\nmid\dim V$.
\end{corollary}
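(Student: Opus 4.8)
The plan is to obtain the two normal forms by reading off the single integer $\dim(U\cap U^\perp)$, which Lemma \ref{Tvirtual2} already constrains to lie in $\{0,1\}$. Since the isomorphism type of a virtual quadratic space depends only on $(U,Q|_U)$, I am free to replace the given space by the economical embedding produced by Theorem \ref{Tmain}, namely one with $U^\perp\subseteq U$ and $\dim V=\dim U+\dim(U\cap U^\perp)$. Note first that $U\cap U^\perp$ is exactly the radical of the restricted form $B|_U$.

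The first step is a parity count. In characteristic $2$ the form $B$ is alternating, so $B|_U$ is an alternating form and its rank is even; rank--nullity then gives $\dim(U\cap U^\perp)=\dim U-\operatorname{rank}(B|_U)\equiv\dim U\pmod 2$. Feeding in the bound $\dim(U\cap U^\perp)\le 1$ of Lemma \ref{Tvirtual2} pins the value down completely: it equals $0$ when $\dim U$ is even and $1$ when $\dim U$ is odd.

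Now I would split into the two cases using the embedding dimension $\dim V=\dim U+\dim(U\cap U^\perp)$. If $\dim U$ is even then $\dim V=\dim U$, so $U=V$ and the space is $(V,V)$; here $\dim V=\dim U$ is even, as the statement requires. If $\dim U$ is odd then $\dim V=\dim U+1$, so $U$ is a hyperplane and $\dim U^\perp=\dim V-\dim U=1$. Choosing a generator $\Omega$ of the line $U^\perp$ and using that $B$ is non-degenerate on $V$ (whence $(U^\perp)^\perp=U$), I get $U=(U^\perp)^\perp=\Omega^\perp$, so the space is $(V,\Omega^\perp)$ with $\Omega\in V$.

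The argument is essentially bookkeeping once Lemma \ref{Tvirtual2} is in hand, and the only step needing genuine care is the identification of the odd-dimensional $U$ with a perp: this rests on the non-degeneracy of $B$ on the ambient $V$, which makes $(U^\perp)^\perp=U$ available and lets the defining hyperplane of $U$ be written as $\Omega^\perp$. It is also worth stating plainly that the dichotomy is governed by the parity of the dimension $\dim U$ of the virtual quadratic space itself; the ambient space $V$ is even-dimensional in both cases, being a non-degenerate alternating space.
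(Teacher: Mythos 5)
Your argument is correct and is precisely the derivation the paper intends but omits: the corollary is stated without proof as an immediate consequence of Lemma \ref{Tvirtual2}, and your route (minimal embedding from Theorem \ref{Tmain}, evenness of the rank of the alternating form $B|_U$, hence $\dim(U\cap U^\perp)\equiv\dim U\pmod 2$, then the bound $\le 1$) fills it in cleanly. Your closing remark is also the right reading of the statement: since a non-degenerate alternating form forces the ambient $V$ to be even-dimensional, the parity condition in the corollary must refer to the dimension $\dim U$ of the virtual quadratic space, exactly as you use it.
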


A few further theorems are necesssary, which can be found in any reference book (see for instance \cite{kitaoka1993}).

\begin{definition}
Given a non-degenerate quadratic space $(V,Q)$ of dimension $2$ over a field of characteristic $2$ with basis $e_1$, $e_2$, the \textbf{Arf invariant} of $V$ is defined as $\Arf(V):={Q(e_1)Q(e_2)\over B(e_1,e_2)^2}$, and if $B(e_1,e_2)=0$, it is defined as $\infty$. Given a non-degenerate quadratic space $(V,Q)$ over a field of characteristic $2$ such that it decomposes as an orthogonal sum into $V_1\oplus\cdots\oplus V_k$ with a corresponding basis $e^1_1$, $e^1_2$, \dots, $e^k_1$, $e^k_2$, the Arf invariant is defined as the sum $\Arf(V)=\Arf(V_1)+\cdots+\Arf(V_k)$.
\end{definition}

\begin{lemma}
Let us denote by $\mathfrak{H}$ the operation $\mathfrak{H}(x)=x+x^2$, which is an \textbf{additive} function over a field $\mathbb{K}$ of characteristic $2$. Let us denote by $\mathfrak{H}(\mathbb{K})$ the image of $\mathfrak{H}$ over $\mathbb{K}$.

Given a non-degenerate quadratic space $(V,Q)$, its Arf invariant is independent from choice of basis, up to an additive term in $\mathfrak{H}(\mathbb{K})$. Alternatively, the Arf invariant of a non-degenerate quadratic space is well defined as an element of the additive quotient group $\mathbb{K}^+/\mathfrak{H}(\mathbb{K})$, unless it is $\infty$.
\end{lemma}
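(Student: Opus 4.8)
The plan is to reduce everything to a single computation on symplectic bases and then invoke the fact that the symplectic group is generated by transvections. First I would record the two features that make the characteristic $2$ case tractable: the associated bilinear form $B$ is alternating (so $B(x,x)=0$), and the defining expression $Q(e_1)Q(e_2)/B(e_1,e_2)^2$ is unchanged if either basis vector is rescaled, since $Q(\lambda e)=\lambda^2 Q(e)$ and $B(\lambda e,\cdot)=\lambda B(e,\cdot)$. Using the latter, I may rescale inside each $2$-dimensional block so that $B(e_1^i,e_2^i)=1$; writing $u_i,w_i$ for the rescaled vectors, an orthogonal decomposition of $(V,Q)$ into non-degenerate planes becomes exactly a symplectic basis, and the Arf invariant becomes $\Phi(\{u_i,w_i\})=\sum_i Q(u_i)Q(w_i)$. (For a non-degenerate $V$ every block is $B$-non-degenerate, so $B(e_1^i,e_2^i)\neq 0$ and the value $\infty$ never occurs.) Since any two symplectic bases of $(V,B)$ differ by an element of $\mathrm{Sp}_{2k}(\mathbb{K})$, it suffices to show that $\Phi$ is invariant modulo $\mathfrak{H}(\mathbb{K})$ under $\mathrm{Sp}_{2k}(\mathbb{K})$; and because $\mathrm{Sp}_{2k}(\mathbb{K})$ is generated by symplectic transvections (a standard fact valid over any field, including characteristic $2$), I only need to treat one transvection.

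The heart of the argument is therefore the effect of a transvection $T_{v,\lambda}(x)=x+\lambda B(x,v)v$ on $\Phi$. I would expand $Q(T_{v,\lambda}x)$ to obtain $Q(T_{v,\lambda}x)=Q(x)+B(x,v)^2\,t$ with $t=\lambda^2 Q(v)+\lambda$, so that, writing $v=\sum_i(p_i u_i+q_i w_i)$ and using $B(u_i,v)=q_i$, $B(w_i,v)=p_i$, the change in $\Phi$ is
\[
\Delta=t\sum_i\bigl(Q(u_i)p_i^2+Q(w_i)q_i^2\bigr)+t^2\sum_i p_i^2 q_i^2 = tS_1+t^2 S_2 .
\]
Two characteristic $2$ identities then collapse this. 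First, the freshman's dream gives $S_2=\sum_i p_i^2q_i^2=\bigl(\sum_i p_iq_i\bigr)^2=P^2$, where $P=\sum_i p_iq_i$. Second, orthogonality of the blocks gives $Q(v)=S_1+P$, hence $S_1=Q(v)+P$. Substituting,
\[
\Delta=t\bigl(Q(v)+P\bigr)+(tP)^2=tQ(v)+\bigl(tP+(tP)^2\bigr)=\mathfrak{H}\bigl(\lambda Q(v)\bigr)+\mathfrak{H}(tP),
\]
since $tQ(v)=\lambda^2Q(v)^2+\lambda Q(v)=\mathfrak{H}(\lambda Q(v))$. As $\mathfrak{H}$ is additive, $\Delta=\mathfrak{H}\bigl(\lambda Q(v)+tP\bigr)\in\mathfrak{H}(\mathbb{K})$.

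Finally I would assemble the pieces: $\mathfrak{H}(\mathbb{K})$ is an additive subgroup of $\mathbb{K}^+$, so composing transvections accumulates the per-step changes additively and the total change stays in $\mathfrak{H}(\mathbb{K})$; hence $\Phi$, and therefore the Arf invariant, is a well-defined element of $\mathbb{K}^+/\mathfrak{H}(\mathbb{K})$, the two-dimensional statement being just the case $k=1$. The main obstacle is entirely contained in the transvection computation of the second paragraph — specifically, recognizing that the two a priori unpleasant sums $S_1$ and $S_2$ organize, via $S_2=P^2$ and $Q(v)=S_1+P$, into a sum of two values of $\mathfrak{H}$; everything else (the reduction to symplectic bases, transitivity of $\mathrm{Sp}_{2k}(\mathbb{K})$ on them, and generation by transvections) is standard. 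A useful consistency check along the way is that a transvection with $v$ lying in a single block reproduces the ordinary two-dimensional change of basis, confirming the formula for $\Delta$.
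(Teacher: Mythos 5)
Your argument is correct, but there is nothing in the paper to compare it against: the lemma appears in the preliminaries among the ``further theorems \dots which can be found in any reference book,'' and the paper simply cites \cite{kitaoka1993} rather than proving it. As a self-contained proof, yours is sound. The reduction is clean: scaling invariance of $Q(e_1)Q(e_2)/B(e_1,e_2)^2$ lets you normalize to symplectic bases, an orthogonal decomposition into non-degenerate planes is the same data as a symplectic basis, any two symplectic bases are related by an element of $\mathrm{Sp}_{2k}(\mathbb{K})$, and generation by symplectic transvections (valid over every field, with no low-dimensional exceptions, unlike the reflection-generation of orthogonal groups) reduces everything to one computation. That computation checks out: $Q(T_{v,\lambda}x)=Q(x)+B(x,v)^2t$ with $t=\lambda^2Q(v)+\lambda$, the identities $S_2=P^2$ and $Q(v)=S_1+P$ both hold by orthogonality of the blocks and the Frobenius, and $\Delta=\mathfrak{H}(\lambda Q(v)+tP)$ follows; since $\mathfrak{H}$ is an additive endomorphism of $\mathbb{K}^+$, its image is a subgroup and the per-transvection changes accumulate inside it. You also correctly observe that non-degeneracy of $B$ forces every block to be non-degenerate, so the value $\infty$ never arises here and the caveat in the statement is vacuous for the spaces considered. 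The one point worth making explicit in a written version is that the transvection identity must be applied at each intermediate symplectic basis when you factor a general symplectic map as $T_1\cdots T_m$; you gesture at this and it is immediate, but it is the only place where the induction actually lives.
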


This is the analogue in characteristic $2$ of the lemma that the discriminant of a quadratic space is well defined as an element of the multiplicative quotient group $\mathbb{K}^\times/(\mathbb{K}^\times)^2$.

When the underlying field is perfect, the Arf invariant completely describes the quadratic space.

\begin{theorem}
Two non-degenerate quadratic spaces $(V_1,Q_1)$ and $(V_2,Q_2)$ of equal dimensions are isomorphic if and only if their Arf invariants differ in an element of $\mathfrak{H}(\mathbb{K})$, or both are equal to $\infty$.
\end{theorem}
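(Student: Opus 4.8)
The statement splits into necessity and sufficiency. For necessity I would suppose $\varphi\colon V_1\to V_2$ is a $Q$-preserving linear isomorphism and fix an orthogonal decomposition of $V_1$ into non-degenerate binary subspaces with a compatible basis $e^1_1,e^1_2,\dots,e^k_1,e^k_2$. Since $\varphi$ preserves $Q$ it also preserves the associated bilinear form $B(x,y)=Q(x+y)-Q(x)-Q(y)$, so the images $\varphi(e^i_j)$ give an orthogonal decomposition of $V_2$ in which every value $Q(e^i_j)$ and $B(e^i_1,e^i_2)$ is reproduced verbatim. Hence $\Arf(V_1)$ in the original basis equals $\Arf(V_2)$ in the image basis, and by the well-definedness lemma the latter agrees with $\Arf(V_2)$ in any basis up to an element of $\mathfrak{H}(\mathbb{K})$. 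I would also note at the outset that for a non-degenerate space $B$ is alternating and non-degenerate, so $B(e^i_1,e^i_2)\neq 0$ in each piece; thus a non-degenerate quadratic space always has finite Arf invariant and the clause ``both equal to $\infty$'' is vacuous here.

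For sufficiency I would first record the two facts about binary pieces that do the real work. After rescaling $e_2$ so that $B(e_1,e_2)=1$, a non-degenerate binary form reads $ax^2+xy+by^2$ with $\Arf=ab$, and it represents $0$ nontrivially exactly when $s^2+s=ab$ is solvable, i.e. exactly when $\Arf\in\mathfrak{H}(\mathbb{K})$; so a binary form is hyperbolic iff its Arf class is trivial and anisotropic otherwise. Next, using that $\mathbb{K}$ is perfect so that squaring is a bijection, I would rescale $e_1\mapsto\lambda e_1$, $e_2\mapsto\lambda^{-1}e_2$ (preserving $B(e_1,e_2)=1$ and multiplying $a$ by $\lambda^2$) to bring any anisotropic binary form to the normal form $x^2+xy+cy^2$ with $c=\Arf$; the remaining admissible changes $e_1\mapsto e_1+te_2$ then show that two such normal forms are isometric precisely when their values of $c$ differ by an element of $\mathfrak{H}(\mathbb{K})$. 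Thus binary non-degenerate forms are classified by their Arf class in $\mathbb{K}^+/\mathfrak{H}(\mathbb{K})$.

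The plan is then to reduce an arbitrary non-degenerate space to a canonical form determined by its dimension and Arf class. Decomposing $(V,Q)$ into binary pieces is possible since $B$ is a non-degenerate alternating form. Whenever $\dim V\geq 4$ the form is isotropic, hence contains an isotropic vector generating a hyperbolic plane $H$ and splitting off a non-degenerate complement $V\cong H\perp V'$ with $\dim V'=\dim V-2$. Iterating strips off hyperbolic planes until the anisotropic kernel has dimension at most $2$; since $\Arf(H)=0$ and the Arf invariant is additive, this kernel is $0$-dimensional when $\Arf(V)\in\mathfrak{H}(\mathbb{K})$ and is the normal anisotropic binary form of Arf class $\Arf(V)$ otherwise. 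Consequently every space of dimension $2m$ is isometric either to $m$ copies of $H$ or to $(m-1)$ copies of $H$ together with the normal anisotropic binary form of its Arf class, so two spaces of equal dimension and equal Arf class reduce to the same canonical form and are isometric. (Witt's extension theorem quoted above underlies the splitting of the hyperbolic planes, and also gives, if one prefers, the cancellation $H\perp A\cong H\perp B\Rightarrow A\cong B$.)

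The \emph{main obstacle} is the isotropy input driving the reduction: that every non-degenerate form of dimension at least three (hence at least four here) is isotropic, equivalently that the anisotropic kernel has dimension at most $2$. This is the point where the hypothesis must really be read in the finite-field setting of the paper rather than ``perfect'' alone, since it follows from the standard Chevalley--Warning counting argument that a quadratic form in at least three variables over a finite field has a nontrivial zero. I would therefore isolate this isotropy statement as a preliminary lemma and treat it as the crux; the binary classification and the peeling-off of hyperbolic planes are then routine.
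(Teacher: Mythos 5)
The paper does not prove this theorem at all: it is quoted as a standard fact, with the reader referred to reference books such as Kitaoka, so there is no in-paper argument to compare against. Your proof is a correct and essentially self-contained derivation of the standard classification: necessity via invariance of $\Arf$ under isometry plus the well-definedness lemma, sufficiency via the binary classification (isotropy of $ax^2+xy+by^2$ iff $ab\in\mathfrak{H}(\mathbb{K})$, normal form $x^2+xy+c y^2$) and the peeling off of hyperbolic planes down to an anisotropic kernel of dimension at most $2$. Your observation that the ``both equal to $\infty$'' clause is vacuous for non-degenerate spaces (since the associated bilinear form is alternating and non-degenerate) is also correct under the paper's definitions.

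One substantive correction: you claim the isotropy of forms in dimension $\ge 4$ forces a retreat from ``perfect'' to ``finite'' via Chevalley--Warning, but this is not necessary, and it matters because the paper asserts the theorem for perfect fields. Writing $V=H_1\perp H_2\perp\cdots$ with $H_i=\langle e_i,f_i\rangle$, the plane $\langle e_1,e_2\rangle$ is totally isotropic for $B$, so $Q$ restricted to it is $Q(e_1)x^2+Q(e_2)y^2=\bigl(\sqrt{Q(e_1)}\,x+\sqrt{Q(e_2)}\,y\bigr)^2$ over a perfect field, which vanishes on a nonzero vector (or $e_1$ itself is isotropic if both norms vanish). So perfectness alone yields the isotropy you isolate as the crux, and your proof then covers the theorem in the generality the paper intends. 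A cosmetic point: to see that the normal forms $x^2+xy+cy^2$ realize exactly the coset $c+\mathfrak{H}(\mathbb{K})$, the relevant substitution is $e_2\mapsto e_2+se_1$ (which changes $c$ to $c+s+s^2$), not $e_1\mapsto e_1+te_2$ (which perturbs the coefficient of $x^2$ instead); the converse direction is, as you say, just the well-definedness of the Arf invariant.
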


Before commencing with our geometric discussion, one more algebraic property is needed:

\begin{lemma}
\label{Lindex}
Over a finite field of characteristic $2$, $\lambda\mathfrak{H}(\mathbb{K})\subseteq\mathfrak{H}(\mathbb{K})$ only if $\lambda\in\{0,1\}$. Otherwise $\lambda\mathfrak{H}(\mathbb{K})\cap\mathfrak{H}(\mathbb{K})$ is an index $4$ subgroup of $\mathbb{K}^+$, and $\lambda\mathfrak{H}(\mathbb{K})$ and $\mathfrak{H}(\mathbb{K})$ generate $\mathbb{K}^+$ as an additive module.
\end{lemma}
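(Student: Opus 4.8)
The plan is to work inside $\mathbb{K}=\mathbb{F}_{2^n}$, regarded as an $n$-dimensional vector space over $\mathbb{F}_2$, and to exploit throughout that $\mathfrak{H}$ is additive. First I would record that $\mathfrak{H}(x)=x+x^2=x(1+x)$ vanishes exactly on $\{0,1\}$, so $\Ker\mathfrak{H}=\{0,1\}$ has two elements and the image $\mathfrak{H}(\mathbb{K})$ is an additive subgroup of index $2$, that is, an $\mathbb{F}_2$-hyperplane of size $2^{n-1}$. For $\lambda\neq 0$, multiplication by $\lambda$ is an additive automorphism of $\mathbb{K}^+$, so $\lambda\mathfrak{H}(\mathbb{K})$ is again a subgroup of index $2$; in particular, for nonzero $\lambda$ the inclusion $\lambda\mathfrak{H}(\mathbb{K})\subseteq\mathfrak{H}(\mathbb{K})$ is equivalent to the equality $\lambda\mathfrak{H}(\mathbb{K})=\mathfrak{H}(\mathbb{K})$.

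The crux is the first assertion, and I would obtain it by studying the whole stabilizer at once rather than a single $\lambda$. Set
\[
R=\{\lambda\in\mathbb{K}: \lambda\mathfrak{H}(\mathbb{K})\subseteq\mathfrak{H}(\mathbb{K})\}.
\]
Since $\mathfrak{H}(\mathbb{K})$ is closed under addition, so is $R$; it is visibly closed under multiplication and contains $0$ and $1$, hence $R$ is a subring of $\mathbb{K}$ with unit. As a subring of a field $R$ is an integral domain, and being finite it is a field, so $R=\mathbb{F}_{2^d}$ for some $d\mid n$. Now $R\cdot\mathfrak{H}(\mathbb{K})\subseteq\mathfrak{H}(\mathbb{K})$ together with additivity makes $\mathfrak{H}(\mathbb{K})$ a vector space over $R$, so its cardinality $2^{n-1}$ must be a power of $|R|=2^d$; thus $d\mid n-1$. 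Combined with $d\mid n$ this forces $d\mid\gcd(n,n-1)=1$, so $R=\mathbb{F}_2=\{0,1\}$. This is precisely the claim that $\lambda\mathfrak{H}(\mathbb{K})\subseteq\mathfrak{H}(\mathbb{K})$ holds only for $\lambda\in\{0,1\}$, and I expect this counting step (identifying $R$ as a subfield and squeezing $d$ between $n$ and $n-1$) to be the only genuine obstacle.

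For the remaining claims, fix $\lambda\notin\{0,1\}$ and write $A=\mathfrak{H}(\mathbb{K})$ and $B=\lambda\mathfrak{H}(\mathbb{K})$, both of index $2$ and hence of size $2^{n-1}$. By the first part $B\not\subseteq A$, so $A+B$ is a subgroup strictly larger than $A$; since $A$ has index $2$, the only such subgroup is $\mathbb{K}^+$, giving $A+B=\mathbb{K}^+$, i.e. $\lambda\mathfrak{H}(\mathbb{K})$ and $\mathfrak{H}(\mathbb{K})$ generate $\mathbb{K}^+$ as an additive module. Finally the product formula $|A+B|\cdot|A\cap B|=|A|\cdot|B|$ yields $|A\cap B|=2^{n-1}\cdot 2^{n-1}/2^n=2^{n-2}$, so $A\cap B$ has index $4$ in $\mathbb{K}^+$, completing the proof.
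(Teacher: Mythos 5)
Your proof is correct, and for the key first assertion it takes a genuinely different route from the paper. The paper identifies $\mathfrak{H}(\mathbb{K})$ explicitly as the root set of the trace polynomial $p(x)=\sum_{i=0}^{n-1}x^{2^i}$ and then argues that $\lambda\mathfrak{H}(\mathbb{K})=\mathfrak{H}(\mathbb{K})$ would force the root sets of $p(x)$ and $p(\lambda x)$ to coincide, which pins down $\lambda=1$ by comparing polynomials. You instead avoid any explicit description of $\mathfrak{H}(\mathbb{K})$: you package all stabilizing scalars into the set $R$, observe that additivity of $\mathfrak{H}(\mathbb{K})$ makes $R$ a finite subring with $1$ and hence a subfield $\mathbb{F}_{2^d}$ with $d\mid n$, and then use that $\mathfrak{H}(\mathbb{K})$ becomes an $R$-vector space of cardinality $2^{n-1}$, forcing $d\mid n-1$ and so $d=1$. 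Your argument is more self-contained (it only uses $\lvert\Ker\mathfrak{H}\rvert=2$ and additivity, not the trace characterization, which the paper asserts with ``it can be checked'') and it generalizes to any additive subgroup whose $\mathbb{F}_2$-dimension is coprime to $n$; the paper's approach buys the concrete identification of $\mathfrak{H}(\mathbb{K})$ with the kernel of the trace, which is a standard fact worth having on record anyway. The second half of your argument (two index-$2$ subgroups either coincide or meet in an index-$4$ subgroup and jointly generate $\mathbb{K}^+$) matches the paper's, with your product-formula count making the index-$4$ claim slightly more explicit.
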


\begin{proof}
Let the number of elements in $\mathbb{K}$ be $2^n$. Then it can be checked that $a\in\mathfrak{H}(\mathbb{K})$ if and only if $p(a)=0$ for $p(x)=\sum_{i=0}^{n-1}x^{2^i}$. Therefore $\mathfrak{H}(\mathbb{K})$ is in fact the set of roots of $p(x)$, all of which are simple roots.

Assume now that $\lambda\ne 0$.
Since $\mathfrak{H}(\mathbb{K})$ and $\lambda\mathfrak{H}(\mathbb{K})$ are both index $2$ subgroups of $\mathbb{K}^+$, either the two are the same, or their intersection is in fact an index $4$ subgroup. However, if they are the same, that means that the roots of $p(x)$ and $p(\lambda x)$ are identical, which is only possible if $\lambda=1$.
\end{proof}

\section{Universal conformal geometry}

Let us recall a few key definitions from \cite{juhasz2016univ}. We will assume $\mathbb{K}$ to be a perfect field of characteristic $2$ and the dimension of the geometry to be odd.

\begin{definition}
A \textbf{universal conformal geometry} of dimension $2\nmid n$ is a tuple $(V,Q,P,L)$ where $V$ is a vector space of dimension $n+3$ over $\mathbb{K}$, $Q$ is a non-degenerate quadratic form with non-degenerate associated bilinear form $B$, and $P$, $L\in\mathbb{P}V$ are orthogonal.
\end{definition}

We shall extend this definition such that $V$ may be a non-degenerate virtual quadratic space. This is only relevant when the dimension of the geometry is even, in which case the virtual quadratic space is of the form $(V,\Omega^\perp)$ for some $\Omega\in V$.

\begin{definition}
A \textbf{universal conformal geometry} of dimension $2\mid n$ is a tuple $(V,Q,\Omega,P,L)$ where $V$ is a vector space of dimension $n+4$ over $\mathbb{K}$, $\Omega\in\mathbb{P}V$ some vector up to scalar, $(V,Q,\Omega^\perp)$ a non-degenerate virtual quadratic space with $\Rad Q|_{\Omega^\perp}\ne 0$, with $P$, $L\in\mathbb{P}V$ are orthogonal.
\end{definition}

We shall refer to the condition $P\perp L$ as assumption (0).
We will also assume that $L$, $P$, $\Omega$ are all linearly independent, and denote this condition by (1). Furthermore we shall assume that either $Q(P)\ne 0$ or $\Arf\langle\Omega, P\rangle\ne\infty$, and the same for $L$ instead of $P$. This later condition shall be denoted by (2).

Some further definitions:

\begin{definition}
The set $[[Q]]$ of points in $\mathbb{P}V$ where $Q$ is zero is called the \textbf{Lie quadric}. The geometry is \textbf{empty} if $[[Q]]$ is empty, and \textbf{non-empty} otherwise.
\end{definition}

\begin{definition}
A \textbf{hypercycle} is an element $c\in\mathbb{P}V$ such that $Q(c)=0$. A hypercycle $c$ is a \textbf{pointcycle} or \textbf{point} if $B(P,c)=0$, and a \textbf{hyperplane} or \textbf{hyperplanecycle} if $B(L,c)=0$. A hypercycle that is both a point and a hyperplane is \textbf{ideal} (or \textbf{ideal hyperplane}). When the dimension is two, a hyperplane is called a \textbf{line}.
\end{definition}

\begin{definition}
Two hypercycles $c_1$, $c_2$ are \textbf{touching} or \textbf{incident} if $B(c_1,c_2)=0$.
\end{definition}

\begin{definition}
An \textbf{isometry} of the geometry is an isometry of $(V,Q)$ that preserves $P$, $L$ and $\Omega$.
An \textbf{isometry of a hypercycle} $c$ is an isometry of the geometry that preserves $c$.

Two geometries $(V,Q,\Omega,P,L)$ and $(V',Q',\Omega',P',L')$ are \textbf{isometric} if there is an orthogonal map $V\to V'$ under which the image of $\Omega$, $P$, $L$ is $\Omega'$, $P'$, $L'$ up to scalar multiple.
\end{definition}

We will say for a class of geometries that the geometry is \emph{defined} if all the geometries in the class are isometric.

\section{Geometry in characteristic $2$}

Once again, $\mathbb{K}$ will denote a perfect field of characteristic $2$, such as a finite field of $2^k$ elements, and we assume that $(V,Q,\Omega,P,L)$ is a universal conformal geometry.

\begin{lemma}
A quadratic space $V_0$ of dimension $3$ can always be embedded into a non-degenerate quadratic space of dimension at least $6$, and if the associated bilinear form is not constant $0$ on $V_0$, it can be embedded into a quadratic space of dimension $4$ as well. Furthermore, the Arf invariant of the embedding space can be arbitrary.
\end{lemma}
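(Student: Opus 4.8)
The plan is to reduce the statement to two facts already at hand: the minimal-embedding part of Theorem~\ref{Tmain}, and the additivity of the Arf invariant under orthogonal sums. First I would determine $\dim(V_0\cap V_0^\perp)$, the corank of the bilinear form $B_0$ associated to $V_0$, a quantity intrinsic to $V_0$. Since $\charop\mathbb{K}=2$ the form $B$ is alternating, so $B_0$ has even rank; as $\dim V_0=3$ is odd, the corank is $3$ or $1$, the first alternative being precisely $B_0\equiv 0$. Theorem~\ref{Tmain} then embeds $V_0$ into a non-degenerate quadratic space of dimension $3+\dim(V_0\cap V_0^\perp)$, namely $6$ when $B_0\equiv 0$ and $4$ when $B_0\not\equiv 0$, which settles the two existence assertions.

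To reach every larger even dimension and to prescribe the Arf invariant, I would take orthogonal sums with $2$-dimensional non-degenerate planes. The plane with basis $e_1,e_2$ and $B(e_1,e_2)=1$, $Q(e_1)=c$, $Q(e_2)=1$ is non-degenerate with $\Arf=c$, so planes of arbitrary Arf invariant exist over $\mathbb{K}$. Given a base embedding of Arf invariant $a$, adjoining a plane of Arf invariant $c-a$ produces a non-degenerate embedding of dimension two greater whose Arf invariant, by additivity, is the prescribed class $c\in\mathbb{K}^+/\mathfrak{H}(\mathbb{K})$; adjoining further hyperbolic planes (Arf $0$) then yields embeddings of every even dimension above the minimal one without altering the Arf invariant.

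The delicate point, which I expect to be the main obstacle, is the Arf invariant at the \emph{minimal} dimension. Building the embedding directly by completing a basis $r_1,\dots,r_k$ of $V_0\cap V_0^\perp$ with hyperbolic partners $f_i$, so that $B(r_i,f_i)=1$ and the $Q(f_i)$ remain free, the adjoined planes contribute $\sum_i Q(r_i)Q(f_i)$ to the Arf invariant. Hence whenever some $Q(r_i)\ne 0$ the minimal embedding already realizes every Arf value, but in the totally isotropic sub-cases---most starkly $Q_0\equiv 0$, where $B_0\equiv 0$ forces the minimal Arf invariant to vanish---an arbitrary Arf invariant is reached only after one further plane. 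This is exactly why the statement reads \emph{at least} $6$: the extra plane secures arbitrariness of the Arf invariant uniformly, at the cost of possibly raising the dimension by $2$.
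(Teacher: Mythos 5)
Your proposal is correct for everything the paper subsequently uses, but it follows a genuinely different route. For existence the paper does not go through Theorem~\ref{Tmain}: it writes the embeddings down explicitly, adjoining a dual triple $f_1,f_2,f_3$ with $B(e_i,f_j)=\delta_{ij}$ and all other new values zero in the $6$-dimensional case, and a single partner $e_4$ to a generator of the radical of $B_0$ in the $4$-dimensional case; your corank argument (alternating form, hence even rank, hence radical of dimension $1$ or $3$) is an equivalent and slightly cleaner way to organize the same dichotomy. The substantive divergence is in prescribing the Arf invariant: the paper works in place at fixed dimension, choosing $e\in V_0$ with $Q(e)\ne 0$ and $f\notin V_0$ with $a_1=\Arf\langle e,f\rangle\notin\{0,\infty\}$ and rescaling $Q(f)$ by $\frac{a+a_0+a_1}{a_1}$, which moves the total invariant from $a_0$ to $a$ without disturbing $Q|_{V_0}$; you instead adjoin an orthogonal plane of complementary Arf invariant, which is unconditional but spends two extra dimensions. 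Your closing caveat is well taken: the paper's in-place method tacitly needs a vector $e\in V_0$ with $Q(e)\ne 0$ (the step ``we may assume that $e\in V_0$'' is not justified there), and for totally singular $V_0$ a $3$-dimensional totally singular subspace forces any $6$-dimensional non-degenerate overspace to be hyperbolic, so arbitrariness genuinely fails at the minimal dimension. In the only situation the paper needs --- $V_0=\langle\Omega,P,L\rangle$ with $Q(\Omega)=1$ and $\dim V=6$ --- both methods deliver every Arf invariant at dimension exactly $6$: yours because either $\Omega$ lies in the radical of $B_0$ with $Q(\Omega)\ne 0$, so the term $Q(\Omega)Q(f_\Omega)$ in your sum is freely adjustable, or else the minimal embedding has dimension $4$ and the added plane carries the adjustment.
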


\begin{proof}
First we shall prove that such an embedding exists for dimension $6$.
Let us fix a basis of $V_0$, $e_1$, $e_2$, $e_3$, and let $V$ be direct sum of $V_0$ and $U$, where $U$ is generated by the vectors $f_1$, $f_2$, $f_3$. Let the quadratic form extend to $V$ through $B(e_i,f_i)=1$, and any other $B(e_i,f_j)=B(f_i,f_j)=Q(f_i)=0$ for $i\ne j$. This is a non-degenerate quadratic space, since for any non-zero vector of the form $v+u$ where $v\in V_0$ and $u\in U$, $B(v+u,f_i)\ne 0$ for some $i\in\{1,2,3\}$ if $v\ne 0$, otherwise $B(v+u,e_i)\ne 0$ since $u\ne 0$.

Assume now that the bilinear form is not constant $0$. This means that there are two vectors $e_1$ and $e_2$ such that $B(e_1,e_2)\ne 0$, and there is a linearly independent vector $e_3$ orthogonal to both of those. Then we may extend $V_0$ by a new vector $e_4$ such that $B(e_3,e_4)=1$, and the resulting quadratic space is non-degenerate. The Arf invariant of the whole space is $\Arf\langle e_1,e_2\rangle+\Arf\langle e_3,e_4\rangle$.

Finally, we shall prove that the Arf invariant can be set to be some arbitrary value $a$. Let us choose an embedding of $V_0$ into $V$ with Arf invariant $a_0$, and two vectors $e$ and $f$ in $V$ such that $a_1=\Arf\langle e,f\rangle\not\in\{0,\infty\}$. Such a pair of vectors always exists if $\dim V>2$, since we may choose an $e$ such that $Q(e)\ne 0$, an $f$ such that $B(e,f)\ne 0$, and if $Q(f)=0$, we may replace $f$ by $f+f'$ where $f'\in\langle e,f\rangle^\perp$ with $Q(f')\ne0$. Furthermore, we may assume that $e\in V_0$ and $f\not\in V_0$, the later because if by the previous construction $f$ happens to be in $V_0$, we may choose some vector $f'\in\langle e,f\rangle^\perp$ such that $Q(f')\ne Q(f)$, and replace $f$ with $f+f'$.

We may replace $Q$ with another $Q'$ if we specify the values of the quadratic form and associated bilinear form on elements of a basis. Therefore let us extend $e$, $f$ into a basis of $V$, and replace $Q$ with $Q'$ in a way such that the only difference is $Q'(f)=\frac{a+a_0+a_1}{a_1}Q(f)$, which results in the expected Arf invariant.
\end{proof}

\begin{theorem}
Assume that $\dim V\ge 6$ is fixed. Then the value $\Arf V$, defined up to $\mathfrak{H}{\mathbb{K}}$, and the values $\Arf\langle \Omega,P\rangle$ and $\Arf\langle \Omega,L\rangle$ define the geometry (this also holds if $\dim V=4$ unless $\Arf\langle\Omega,P\rangle=\Arf\langle\Omega,L\rangle$ are both zero). We may also assume that $Q(\Omega)=1$.
\end{theorem}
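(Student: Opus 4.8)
The plan is to prove the statement in the form: given two geometries $(V,Q,\Omega,P,L)$ and $(V',Q',\Omega',P',L')$ of the same dimension whose invariants $\Arf V$, $\Arf\langle\Omega,P\rangle$, $\Arf\langle\Omega,L\rangle$ agree with those of the primed geometry, I will construct an isometry between them. First I would dispose of the ambient space: since $\dim V=\dim V'$ and $\Arf V=\Arf V'$ modulo $\mathfrak{H}(\mathbb{K})$, the classification of non-degenerate quadratic spaces by the Arf invariant produces an isometry $(V,Q)\cong(V',Q')$, along which I identify the two spaces. It then remains to find an element of $\Iso(V,Q)$ carrying the marked vectors $\Omega,P,L$ to scalar multiples of $\Omega',P',L'$. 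Next I would normalize: because $\mathbb{K}$ is perfect of characteristic $2$, every scalar has a unique square root, so I may rescale to arrange $Q(\Omega)=Q(\Omega')=1$. This is the asserted normalization, and it is exactly what makes $\Arf\langle\Omega,P\rangle=Q(P)/B(\Omega,P)^2$ a genuine scalar invariant of the configuration rather than a class modulo $\mathfrak{H}(\mathbb{K})$, since here $\Omega$ is a fixed vector and may not be replaced by $P+c\Omega$.

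The core step is to build a linear isometry $\psi\colon\langle\Omega,P,L\rangle\to\langle\Omega',P',L'\rangle$ matching the marked vectors. By assumption (1) both spans are $3$-dimensional, so it suffices to prescribe $\psi$ on the basis and check that every Gram entry is preserved. Normalization forces $\psi(\Omega)=\Omega'$, as the equation $\mu^2 Q(\Omega')=Q(\Omega)$ has the unique solution $\mu=1$. I then set $\psi(P)=\mu_P P'$ and $\psi(L)=\mu_L L'$ and choose the scalars from the matched Arf data: if $\Arf\langle\Omega,P\rangle=\infty$ then $B(\Omega,P)=B(\Omega',P')=0$, assumption (2) gives $Q(P),Q(P')\ne 0$, and $\mu_P=\sqrt{Q(P)/Q(P')}$ matches $Q$ while $B$ matches trivially; if instead $\Arf\langle\Omega,P\rangle$ is finite then $\mu_P=B(\Omega,P)/B(\Omega',P')$ matches $B(\Omega,\cdot)$, and a short computation using $Q(\Omega)=1$ shows $Q(\mu_P P')=B(\Omega,P)^2\,\Arf\langle\Omega,P\rangle=Q(P)$. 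The same dichotomy fixes $\mu_L$. The only remaining Gram entry is the cross term, and $B(\psi P,\psi L)=\mu_P\mu_L B(P',L')=0=B(P,L)$ by assumption (0); hence $\psi$ preserves $Q$ on all of the span and is an isometry of the marked $3$-spaces.

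It remains to extend $\psi$ to an isometry of $V$. I would invoke the extension theorem cited in the preliminaries, but its hypothesis that neither subspace contain degenerate vectors can fail here: in characteristic $2$ the restriction of $Q$ to the $3$-dimensional span may have a nontrivial radical. This is the main obstacle. I would overcome it by first enlarging $\langle\Omega,P,L\rangle$ and $\langle\Omega',P',L'\rangle$ to non-degenerate subspaces: the radical of $Q$ on the span is totally isotropic, so I adjoin a hyperbolic partner for each of its basis vectors inside $V$, doing so compatibly with $\psi$, which already carries the radical isometrically to that of the primed span. The embedding lemma above guarantees that such a non-degenerate completion fits inside $V$ precisely when $\dim V\ge 6$, or when $\dim V=4$ and $B$ is not identically zero on the span; on the resulting completions the extension theorem applies, and Witt cancellation then matches the orthogonal complements, whose isometry type is already pinned down by $\dim V$ and $\Arf V$, yielding the desired $\varphi\in\Iso(V,Q)$.

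Finally I would isolate the exceptional case flagged in the statement. When $\dim V=4$ and $\Arf\langle\Omega,P\rangle=\Arf\langle\Omega,L\rangle=0$, one has $Q(P)=Q(L)=0$ with $B(\Omega,P),B(\Omega,L)\ne 0$, and the span $\langle\Omega,P,L\rangle$ carries the nonzero radical vector $P+L$; with only one dimension left in $V$ there is no room to complete it to a non-degenerate space and extend freely, so the three invariants need not determine the geometry, exactly the stated exclusion. I expect the degeneracy bookkeeping of the extension step, rather than the Gram-matching of $\psi$, to be the delicate part, with the characteristic-$2$ behaviour of the radical and the tightness of the $\dim V=4$ count requiring the most care.
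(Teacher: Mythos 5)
Your proposal follows essentially the same route as the paper: normalize $Q(\Omega)=1$, observe that the Arf data determines the quadratic form on the span $\langle\Omega,P,L\rangle$ together with the marked vectors up to isometry, and extend that isometry to all of $V$ after identifying the ambient spaces via $\Arf V$; your extra care with the possible radical of $Q$ on the $3$-dimensional span (e.g.\ the vector $P+L$ when both restricted Arf invariants vanish) addresses a point the paper's proof passes over silently when it invokes the extension theorem. The only substantive omission is the existence half: the paper also uses its embedding lemma to show that every choice of the three invariants is realized by some geometry, and this is arguably what fails in the excluded case $\dim V=4$, $\Arf\langle\Omega,P\rangle=\Arf\langle\Omega,L\rangle=0$ --- there the Gram data of $V$ is rigid and forces $\Arf V=0$, so the obstruction is realizability of an arbitrary $\Arf V$ rather than the failure of uniqueness you describe.
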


\begin{proof}
Since $\Rad Q|_{\Omega^\perp}\ne 0$, $Q(\Omega)\ne 0$. In a perfect field, we may scale $\Omega$ in a way such that $Q(\Omega)=1$, and this does not change the parameters $\Arf V$, $\Arf\langle\Omega,P\rangle$ and $\Arf\langle\Omega,L\rangle$.

The subspace $V_0:=\langle\Omega,P,L\rangle$, which is of dimension $3$ by assumption (1), and the restriction of the quadratic form is entirely determined by the value of the quadratic form and bilinear form on $\Omega$, $P$, $L$ and the pairs, respectively. By assumption (0), $B(P,L)=0$. If $\Arf\langle\Omega,P\rangle\ne\infty$, we may rescale $P$ so that $B(\Omega,P)=1$. Otherwise by assumption (2), we may rescale $P$ so that $Q(P)=1$, since in a perfect field of characteristic $2$, $\lambda^2=Q(P)$ has a single solution.

Given two geometries $V$ and $V'$ with identical Arf invariants, we identify the quadratic spaces $(V,Q)$ and $(V',Q')$. Then the two generated subspaces $V_0$ and $V_0'$ are isometric, and this isometry extends into an isometry of $V$.

Whenever $\Arf\langle\Omega,P\rangle$ and $\Arf\langle\Omega,L\rangle$ are fixed, this gives us a quadratic space $V_0$, and by the previous lemma, we may always embed it into a quadratic space of the desired invariant. Hence such a geometry always exists.
\end{proof}

We shall denote by $\Arf(x):=\Arf\langle\Omega,x\rangle$.
From now on, we will concentrate on geometries of dimension $2$, hence $\dim V=6$.

\begin{theorem}
Let us denote by $\Sigma$ a hyperbolic space of dimension $2$.
A non-degenerate geometry of dimension $2$ is always isomorphic to one of the form $(\Sigma\oplus\Sigma\oplus\Sigma,\Omega)$.
\end{theorem}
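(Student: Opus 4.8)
The plan is to show that a non-degenerate two-dimensional geometry, for which $\dim V = 6$, is determined up to isometry by its invariants, and that these invariants coincide with those of $(\Sigma\oplus\Sigma\oplus\Sigma,\Omega)$. By the preceding theorem, the geometry is completely determined by the triple $\Arf V$ (modulo $\mathfrak{H}(\mathbb{K})$), $\Arf(P)=\Arf\langle\Omega,P\rangle$ and $\Arf(L)=\Arf\langle\Omega,L\rangle$, together with the normalization $Q(\Omega)=1$. So the entire task reduces to computing these three invariants for the model $(\Sigma\oplus\Sigma\oplus\Sigma,\Omega)$ and verifying that the non-degeneracy and compatibility assumptions (0), (1), (2) force the given geometry to share them.

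First I would fix an explicit basis adapted to the three hyperbolic summands, say $e_1,f_1,e_2,f_2,e_3,f_3$ with $B(e_i,f_i)=1$ and all other pairings zero and $Q(e_i)=Q(f_i)=0$. Here each $\langle e_i,f_i\rangle\cong\Sigma$ has Arf invariant $Q(e_i)Q(f_i)/B(e_i,f_i)^2 = 0$, so by additivity $\Arf(\Sigma\oplus\Sigma\oplus\Sigma)=0$ in $\mathbb{K}^+/\mathfrak{H}(\mathbb{K})$. The next step is to locate $\Omega$, $P$, $L$ inside this space. Since $Q(\Omega)=1$ and $(V,Q,\Omega^\perp)$ must be a non-degenerate virtual quadratic space with $\Rad Q|_{\Omega^\perp}\neq 0$, I would choose $\Omega$ so that $\Omega^\perp$ has a one-dimensional radical, which in the hyperbolic model is arranged by taking $\Omega$ to be a suitable anisotropic vector such as $\Omega=e_1+f_1$ (giving $Q(\Omega)=B(e_1,f_1)=1$). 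Then $P$ and $L$ are selected as orthogonal hypercycle-type vectors in the remaining hyperbolic planes, and I would compute $\Arf\langle\Omega,P\rangle$ and $\Arf\langle\Omega,L\rangle$ directly from the definition.

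The heart of the argument is then the converse: given an arbitrary non-degenerate two-dimensional geometry $(V,Q,\Omega,P,L)$, I must show its three invariants can be brought to match those of the model. The normalization $Q(\Omega)=1$ is already granted by the previous theorem. For $\Arf V$, since $\dim V=6$ and $\mathbb{K}$ is perfect, the classification theorem says the quadratic space is determined by its Arf invariant alone; the non-degeneracy of the geometry together with the existence claim in the embedding lemma lets me realize any prescribed Arf value, and I would argue that the non-degenerate hyperbolic-type geometry forces $\Arf V\in\mathfrak{H}(\mathbb{K})$, i.e.\ the class of $0$. The subtler point is handling $\Arf(P)$ and $\Arf(L)$: I expect these to be forced to specific values by the interplay of $P\perp L$, the non-degeneracy of $\Omega^\perp$, and the requirement $\Rad Q|_{\Omega^\perp}\neq 0$, which pins down the relationship between $\Omega$ and the isotropic directions carrying $P$ and $L$.

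The main obstacle, I anticipate, will be verifying that the non-degeneracy condition on the geometry genuinely constrains all three invariants simultaneously to the hyperbolic values, rather than leaving a free parameter. In particular I would need to confirm that no anomalous case (such as $\Arf(P)=\Arf(L)=0$ combined with a degenerate configuration, which the previous theorem explicitly flagged as exceptional for $\dim V=4$) can arise here; since $\dim V=6$ that exceptional clause should not bite, but I would check carefully that the $\infty$ versus finite dichotomy for $\Arf(P)$ and $\Arf(L)$ is resolved correctly by assumption (2). Once the invariants are shown to agree, the isometry extension theorem (Corollary 1.2.1 of \cite{kitaoka1993}), already invoked in the preceding proof, delivers the isomorphism of geometries, completing the identification with $(\Sigma\oplus\Sigma\oplus\Sigma,\Omega)$.
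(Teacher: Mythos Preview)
You have misread what the theorem asserts, and this sends your whole plan in the wrong direction. The conclusion is \emph{not} that the full geometry $(V,Q,\Omega,P,L)$ is unique up to isometry; it is only that the underlying quadratic space $V$ is isomorphic to $\Sigma\oplus\Sigma\oplus\Sigma$ (with some anisotropic $\Omega$ in it). Indeed, immediately after this theorem the paper goes on to say that fixing $\Arf(P)$ and $\Arf(L)$ \emph{distinguishes} between geometries of this form, and the final classification table lists nine inequivalent geometries all living on $\Sigma\oplus\Sigma\oplus\Sigma$. So your program of matching $\Arf(P)$ and $\Arf(L)$ to specific model values is not just unnecessary but impossible: those invariants are genuinely free parameters, not forced by non-degeneracy. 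The only thing to prove is $\Arf V\equiv 0\pmod{\mathfrak{H}(\mathbb{K})}$, and on that single essential point your proposal offers nothing beyond ``I would argue that the non-degenerate hyperbolic-type geometry forces $\Arf V\in\mathfrak{H}(\mathbb{K})$''.

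The paper's argument supplies exactly the missing idea, and it is geometric rather than invariant-theoretic. Non-degeneracy of the geometry means there exist a non-ideal line $\ell$ and a non-ideal point $p$ incident to it. Then $Q(\ell)=0$ and $B(\ell,P)\ne 0$ make $\langle\ell,P\rangle$ a hyperbolic plane; likewise $Q(p)=0$ and $B(p,L)\ne 0$ make $\langle p,L\rangle$ a hyperbolic plane; and the relations $P\perp L$, $\ell\perp L$, $p\perp P$, $p\perp\ell$ make these two planes orthogonal. Thus $V$ already contains $\Sigma\oplus\Sigma$ as an orthogonal summand, which forces $\Arf V = 0+0+\Arf(\text{complement})$ and hence (the paper asserts) the remaining $2$-dimensional complement is hyperbolic as well. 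This explicit construction of hyperbolic planes from the incidence data is the core step you are missing.
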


\begin{proof}
Since it is non-degenerate, we have a line $\ell\perp L$ and a point $p\perp P$ such that the point is on the line: $p\perp\ell$, and neither of them are ideal, hence $\ell\not\perp P$, $p\not\perp L$. Therefore $\langle\ell,P\rangle$ and $\langle p,L\rangle$ are orthogonal hyperbolic spaces, and since the Arf invariant of the space is $0$, $V$ is isomorphic to $\Sigma\oplus\Sigma\oplus\Sigma$.
\end{proof}

Let us assume that the geometry is of the above form.
From a geometric point of view, fixing $\Arf(P)$ and $\Arf(L)$ will distinguish between similar geometries. Note that the isometry group of a geometry depends only on the choice of $P$, $L$, and the subspace $\langle P,L,\Omega\rangle$, and that none of the objects defined thus far depend on the choice of $\Omega$.

\begin{definition}
Two geometries $(V,Q,P,L,\Omega)$ and $(V',Q',P',L',\Omega')$ are \textbf{transformation equivalent} if there is an isometry between them that sends $P$ to $P'$, $L$ to $L'$ (up to scalar), and the subspace $\langle P,L,\Omega\rangle$ to $\langle P',L',\Omega'\rangle$.
\end{definition}

We get transformation equivalent spaces by replacing $\Omega$ with some $\Omega'=\Omega+\alpha P+\beta L$ for some $\alpha$, $\beta\in\mathbb{K}$, assuming that $Q(\Omega')\ne 0$.

\begin{lemma}
Let us replace $\Omega$ by $\Omega+\alpha P+\beta L$ for some $\alpha$, $\beta\in\mathbb{K}$, assuming that the new $Q(\Omega)\ne 0$. If either $\Arf(P)$ or $\Arf(L)$ are $0$ or $\infty$, both of their values shall remain the same. Otherwise, $\Arf(L)$ with the new $\Omega$ will become
$$\Arf(L)+{B(\Omega,P)^2Q(L)\over Q(P)B(\Omega,L)^2}\mathfrak{H}\left(\alpha{Q(P)\over B(\Omega,P)}\right)+\mathfrak{H}\left(\beta{Q(L)\over B(\Omega,L)}\right)$$
and similarly for $\Arf(P)$, only the occurences of $\alpha$, $P$ and $\beta$, $L$ exchanged.
\end{lemma}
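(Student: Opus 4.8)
The plan is to compute everything directly from the definition $\Arf(x)=\Arf\langle\Omega,x\rangle=Q(\Omega)Q(x)/B(\Omega,x)^2$ after substituting $\Omega'=\Omega+\alpha P+\beta L$. The first and most useful observation is that the denominators are left unchanged. Indeed, since $B(x,x)=2Q(x)=0$ in characteristic $2$ and $B(P,L)=0$ by assumption (0), one finds $B(\Omega',L)=B(\Omega,L)+\alpha B(P,L)+\beta B(L,L)=B(\Omega,L)$, and symmetrically $B(\Omega',P)=B(\Omega,P)$. So only the numerator factor $Q(\Omega)$ changes, and the whole computation reduces to expanding $Q(\Omega')$.

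Next I would expand $Q(\Omega')$ by the characteristic-$2$ polarization identity $Q(u+v)=Q(u)+Q(v)+B(u,v)$, again using $B(P,L)=0$, to get
$$Q(\Omega')=Q(\Omega)+\alpha^2Q(P)+\beta^2Q(L)+\alpha B(\Omega,P)+\beta B(\Omega,L).$$
Substituting into $\Arf'(L)=Q(\Omega')Q(L)/B(\Omega,L)^2$ and separating the five resulting terms, the constant term reproduces $\Arf(L)$, and it remains to recognize the $\alpha$- and $\beta$-contributions.

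The core of the proof is the regrouping into the two $\mathfrak{H}$-terms. The two $\beta$-terms, $\beta B(\Omega,L)Q(L)/B(\Omega,L)^2$ and $\beta^2Q(L)^2/B(\Omega,L)^2$, are exactly the linear and quadratic parts of $\mathfrak{H}\!\left(\beta Q(L)/B(\Omega,L)\right)$. For the two $\alpha$-terms I would factor out the scalar $C:=B(\Omega,P)^2Q(L)/\bigl(Q(P)B(\Omega,L)^2\bigr)$ and check that $C\,\mathfrak{H}\!\left(\alpha Q(P)/B(\Omega,P)\right)$ reproduces both $\alpha B(\Omega,P)Q(L)/B(\Omega,L)^2$ and $\alpha^2Q(P)Q(L)/B(\Omega,L)^2$. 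The point that needs verifying — and the only real (if routine) obstacle — is that the scalar read off from the linear part and the scalar read off from the quadratic part coincide, which they do, both equalling $C$. This gives the stated formula for $\Arf(L)$, and the formula for $\Arf(P)$ follows by the symmetry $P\leftrightarrow L$, $\alpha\leftrightarrow\beta$.

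Finally I would dispose of the degenerate values. Because the denominators are unchanged, if $\Arf(L)=\infty$, i.e. $B(\Omega,L)=0$, then $B(\Omega',L)=0$ and $\Arf'(L)=\infty$ is preserved; and because $Q(L)$ is an overall factor of the numerator, if $\Arf(L)=0$, i.e. $Q(L)=0$, then $\Arf'(L)=0$ is preserved; the same argument applies to $\Arf(P)$. These are precisely the situations in which the coefficient $C$ (or its analogue) fails to be defined, which is why the displayed formula is stated only outside them. It is worth remarking, in connection with Lemma~\ref{Lindex}, that $C$ is in general neither $0$ nor $1$, so the term $C\,\mathfrak{H}(\cdot)$ genuinely moves the Arf class modulo $\mathfrak{H}(\mathbb{K})$; this is what makes $\Arf(P)$ and $\Arf(L)$ nontrivial distinguishing parameters rather than invariants of transformation equivalence.
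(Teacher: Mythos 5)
Your computation is correct and is exactly the ``simple calculation'' the paper invokes without writing out: the paper's proof consists of the single remark that the calculation follows from the definition of $\Arf$ and from $B(\Omega',P)=B(\Omega,P)$, and your expansion of $Q(\Omega')$, the matching of the linear and quadratic parts of each $\mathfrak{H}$-term, and the check that the two readings of the coefficient $C$ agree, fill in precisely that calculation. So for the displayed formula there is nothing to add.

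One caveat about the first sentence of the lemma. You verify only the ``self'' half of the degenerate clause: if $\Arf(L)\in\{0,\infty\}$ then $\Arf(L)$ is preserved (and likewise for $P$). The lemma as stated claims that if \emph{either} of $\Arf(P)$, $\Arf(L)$ is $0$ or $\infty$ then \emph{both} values remain the same, and that cross-claim does not follow from your argument --- indeed your own expansion contradicts it at face value: if $\Arf(P)=0$, i.e.\ $Q(P)=0$ with $B(\Omega,P)\ne 0$ by assumption (2), the quadratic $\alpha$-term in $Q(\Omega')$ vanishes but the linear term survives, giving
$\Arf'(L)=\Arf(L)+\alpha\,B(\Omega,P)Q(L)/B(\Omega,L)^2+\mathfrak{H}\bigl(\beta Q(L)/B(\Omega,L)\bigr)$,
which varies with $\alpha$ when $\Arf(L)\notin\{0,\infty\}$; similarly for $\Arf(P)=\infty$ the $\alpha^2$-term survives. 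Since the subsequent corollary leans on this clause, you should either restrict the claim to the half you actually prove or address the cross-case explicitly; as it stands this part of the statement is not established by your proof (nor, for that matter, by the paper's one-line one).
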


\begin{proof}
This is a simple calculation, using the definition of $\Arf$ and the fact that $B(\Omega',P)=B(\Omega,P)$.
\end{proof}

\begin{corollary}
A class of transformation equivalent geometries, and thus the group of isometries of a geometry is identified by $\rho:={\Arf(L)\over\Arf(P)}$, by $\Arf(L)$ and $\Arf(P)$ if either of them is $0$ or $\infty$, and in case of $\rho=1$, by $\Arf(L)+\mathfrak{H}(\mathbb{K})$.
\end{corollary}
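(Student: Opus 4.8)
The plan is to classify transformation equivalence classes by analysing the orbit of the pair $(\Arf(P),\Arf(L))$ under the substitution $\Omega\mapsto\Omega+\alpha P+\beta L$, since by definition transformation equivalence is generated by these substitutions together with the isometries of $(V,Q)$ fixing $P$, $L$ and $\langle P,L,\Omega\rangle$, while $\Arf V=0$ stays fixed throughout. First I would rewrite the preceding lemma in the normalised form
$$\Delta\Arf(L)=\rho\,\mathfrak{H}(s)+\mathfrak{H}(t),\qquad \Delta\Arf(P)=\mathfrak{H}(s)+\rho^{-1}\mathfrak{H}(t),$$
where $s=\alpha Q(P)/B(\Omega,P)$ and $t=\beta Q(L)/B(\Omega,L)$ range independently over $\mathbb{K}$ and $\rho=\Arf(L)/\Arf(P)$. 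The immediate consequence is $\Delta\Arf(L)=\rho\,\Delta\Arf(P)$, whence the new ratio again equals $\rho$; thus $\rho$ is a transformation invariant, and geometries with distinct $\rho$ cannot be equivalent.

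For the converse in the generic stratum, where $\Arf(P)$ and $\Arf(L)$ are both finite and nonzero and $\rho\ne 1$, I would show the substitution acts transitively on the fibre $\{\Arf(P):\Arf(L)=\rho\Arf(P)\}$. The set of achievable increments of $\Arf(P)$ is $\mathfrak{H}(\mathbb{K})+\rho^{-1}\mathfrak{H}(\mathbb{K})$, namely the subgroup of $\mathbb{K}^+$ generated by the two cosets. Since $\rho^{-1}\notin\{0,1\}$, Lemma \ref{Lindex} gives that $\rho^{-1}\mathfrak{H}(\mathbb{K})$ and $\mathfrak{H}(\mathbb{K})$ generate all of $\mathbb{K}^+$, so every target value of $\Arf(P)$ is reached. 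As the resulting $Q(\Omega')=\Arf(P)'\,B(\Omega,P)^2/Q(P)$ is determined by the target and is nonzero exactly when the target is a legitimate geometry, the constraint $Q(\Omega')\ne 0$ never obstructs. Combined with the earlier theorem that $\Arf V$, $\Arf(P)$, $\Arf(L)$ determine the geometry, this shows two generic geometries with equal $\rho$ are transformation equivalent, so $\rho$ is a complete invariant there.

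The remaining strata are handled by specialising the same formula. When $\rho=1$ the two displayed increments coincide and sweep out $\mathfrak{H}(\mathbb{K})+\mathfrak{H}(\mathbb{K})=\mathfrak{H}(\mathbb{K})$, moving $\Arf(L)=\Arf(P)$ through its entire $\mathfrak{H}(\mathbb{K})$-coset while keeping the two equal; hence the class is pinned down precisely by $\Arf(L)+\mathfrak{H}(\mathbb{K})$. When $\Arf(P)$ or $\Arf(L)$ lies in $\{0,\infty\}$ the ratio $\rho$ degenerates, and here I would invoke the preceding lemma that both values are preserved, noting that $\Arf(P)=0$ is the condition $Q(P)=0$ and $\Arf(P)=\infty$ the condition $B(\Omega,P)=0$, each manifestly unaffected by the substitution since $P$ and the values of $B(\Omega,\cdot)$ on $P$, $L$ are unchanged, so that the pair $(\Arf(P),\Arf(L))$ itself labels the class. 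Finally, since the isometry group of a geometry was observed to depend only on $P$, $L$ and the subspace $\langle P,L,\Omega\rangle$, transformation equivalent geometries share an isometry group, which yields the second assertion of the corollary.

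The step I expect to be most delicate is the boundary stratum: reconciling the statement that both Arf values are frozen when one of them is $0$ or $\infty$ with the literal substitution formula, and verifying that the resulting list of invariants is genuinely exhaustive and mutually exclusive across the generic, the $\rho=1$, and the degenerate cases. The transitivity step is the other load-bearing point, but there the generation statement of Lemma \ref{Lindex} does essentially all the work once the increments are put in the form above.
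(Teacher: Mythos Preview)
Your argument is correct and follows essentially the same route as the paper: both proofs reduce to computing the orbit of $(\Arf(P),\Arf(L))$ under the substitution $\Omega\mapsto\Omega+\alpha P+\beta L$, observe that $\rho$ is preserved, and then invoke Lemma~\ref{Lindex} to get transitivity on the fibre when $\rho\neq 1$ and the $\mathfrak{H}(\mathbb{K})$-coset when $\rho=1$. Your write-up is in fact a bit more careful than the paper's in two places: you explicitly verify the invariance of $\rho$ via $\Delta\Arf(L)=\rho\,\Delta\Arf(P)$ rather than asserting it, and you check that the side condition $Q(\Omega')\neq 0$ never obstructs transitivity in the generic stratum (the paper passes over this in silence); note also that your coefficient $\rho$ in $\Delta\Arf(L)=\rho\,\mathfrak{H}(s)+\mathfrak{H}(t)$ agrees with the lemma as stated, whereas the paper's own proof writes $\rho^{-1}$ there, which appears to be a slip but is harmless for the argument.
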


\begin{proof}
Two geometries are related by the transformation established in the previous lemma.
By the previous lemma, the statement of the corollary is clear if $\Arf(P)$ or $\Arf(L)$ is $0$ or $\infty$, otherwise $\rho$ is preserved by the above transformation. Then it is enough to now $\rho$ and $\Arf(L)$ to reconstruct the geometry.

Replacing $\alpha{Q(P)\over B(\Omega,P)}$ and $\beta{Q(L)\over B(\Omega,L)}$ by $x$ and $y$, respectively, we get identical geometries with $\Arf(L)$ changing to $\Arf(L)+\rho^{-1}\mathfrak{H}(x)+\mathfrak{H}(y)$, assuming that $\Arf(L)\not\in\{0,\infty\}$. If $\rho\mathfrak{H}(\mathbb{K})\ne\mathfrak{H}(\mathbb{K})$, then $\rho^{-1}\mathfrak{H}(\mathbb{K})$ and $\mathfrak{H}(\mathbb{K})$ are non-identical index $2$ subgroups of $\mathbb{K}^+$, hence they generate $\mathbb{K}$, and $\Arf(L)$ can take any value, but resulting in equivalent geometries. Otherwise, $\Arf(L)$ is defined up to $\mathfrak{H}(\mathbb{K})$, which is only possible if $\rho=1$ by \ref{Lindex}.
\end{proof}

\section{Distance in characteristic $2$}

In \cite{juhasz2016univ}, distance was defined through fixing a line, and considering the isometry group that fixes the line. In dimension $2$, a line is given through a single hypercycle $\ell$ orthogonal to $L$. First, we need to consider the case when $\ell$ is linearly dependent from $\Omega$, $L$, and $P$.

\begin{definition}
The \textbf{group of isometries of a cycle} $c$ (respectively, that of a line $\ell$) is the set of isometries of $V$ that fix $\Omega$, $P$, $L$ and $c$ (respectively, $\ell$). We call a cycle $c$ (respectively, a line $\ell$) \textbf{independent}, if it is linearly independent from $\Omega$, $P$, $L$ as vectors in $V$.
\end{definition}

\begin{lemma}
A non-ideal real line $\ell$ is independent, unless $B(\Omega,L)=0$ and $\ell=Q(\Omega)^{1/2}L+Q(L)^{1/2}\Omega$ or its scalar multiple.
\end{lemma}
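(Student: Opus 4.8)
The plan is to assume the failure of independence and read off the coefficients forced by the defining conditions of a non-ideal real line. So suppose $\ell$ is linearly dependent on $\Omega$, $P$, $L$ and write $\ell = a\Omega + bP + cL$. Being a line means $\ell$ is a hypercycle orthogonal to $L$, i.e. $Q(\ell)=0$ and $B(L,\ell)=0$; being non-ideal means $\ell$ is not also a point, i.e. $B(P,\ell)\ne 0$; and being real means $\ell$ lies in the virtual subspace $\Omega^\perp$, i.e. $B(\Omega,\ell)=0$. I will freely use the characteristic $2$ identities $B(x,x)=0$, assumption (0) that $B(P,L)=0$, assumption (1) that $\Omega,P,L$ are independent (so $a,b,c$ are genuine coordinates), and the normalization $Q(\Omega)=1\ne 0$.

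The first step is purely bilinear bookkeeping. Since $B(P,P)=B(P,L)=0$, expanding gives $B(P,\ell)=aB(\Omega,P)$; likewise $B(L,L)=B(P,L)=0$ gives $B(L,\ell)=aB(\Omega,L)$, and $B(\Omega,\Omega)=0$ gives $B(\Omega,\ell)=bB(\Omega,P)+cB(\Omega,L)$. The non-ideal condition $B(P,\ell)\ne0$ forces $a\ne0$ and $B(\Omega,P)\ne0$. The line condition $B(L,\ell)=aB(\Omega,L)=0$ then forces $B(\Omega,L)=0$, which is the first half of the claimed exceptional case. Feeding $B(\Omega,L)=0$ into the reality condition leaves $B(\Omega,\ell)=bB(\Omega,P)=0$, and since $B(\Omega,P)\ne0$ this forces $b=0$. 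Thus $\ell=a\Omega+cL$.

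It remains to solve $Q(\ell)=0$. With $b=0$ and $B(\Omega,L)=0$ the cross term drops out and $Q(\ell)=a^2Q(\Omega)+c^2Q(L)=0$; note $c\ne0$, else $a^2Q(\Omega)=0$ would give $a=0$. Because $\mathbb{K}$ is perfect the map $x\mapsto x^2$ is a bijection, so this reads $(a/c)^2=Q(L)/Q(\Omega)$ and yields, up to scalar, $\ell=Q(L)^{1/2}\Omega+Q(\Omega)^{1/2}L$, exactly the claimed vector. One should also observe that non-ideality forces $Q(L)\ne0$: if $Q(L)=0$ then $\ell\propto L$, which is a point as well (as $B(P,\ell)\propto B(P,L)=0$) and hence ideal. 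Finally, to see the exception is genuine rather than vacuous, I would check that whenever $B(\Omega,L)=0$ and $Q(L)\ne0$ this $\ell$ is indeed a non-ideal real line, which is the same short computation run backwards.

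I expect the only real subtlety to be the correct use of the reality hypothesis. Dropping it, the equation $Q(\ell)=0$ with $b$ free has a one-parameter family of solutions, each a dependent non-ideal line, so the lemma would be false; it is precisely the condition that real cycles lie in $\Omega^\perp$ (equivalently $B(\Omega,\ell)=0$) that collapses the $P$-direction and singles out the isotropic direction of $\langle\Omega,L\rangle$. Everything else is routine characteristic $2$ bookkeeping, so the weight of the argument rests on invoking reality as $\Omega$-orthogonality; should the working definition of \emph{real} differ, the task reduces to showing that it still implies $b=0$.
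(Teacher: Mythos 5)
Your proof is correct and follows the same route as the paper: write $\ell$ as a linear combination of $\Omega$, $P$, $L$, then read off $\gamma\ne0$ and $B(\Omega,L)=0$ from the non-ideal and line conditions. The paper stops there and leaves the rest ``to the reader,'' whereas you carry out the remaining steps (reality killing the $P$-coefficient, and $Q(\ell)=0$ plus perfectness pinning down the isotropic direction of $\langle\Omega,L\rangle$), so your write-up is simply a completed version of the paper's sketch.
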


\begin{proof}
By (0), the first three are definitely linearly independent. We know that $Q(\ell)=0$ since it is a cycle, $B(L,\ell)=0$ since $\ell$ is a line, $B(P,\ell)\ne0$ since it is non-ideal, and that $B(\Omega,\ell)=0$ since it is real. Assuming that $\ell$ is in the space generated by $P$, $\Omega$, $L$, it can be written as a linear combination $\alpha P+\beta L+\gamma\Omega$. Then we may check all these conditions, among which $0\ne B(P,\ell)=\gamma B(\Omega,P)$ means that $\gamma\ne 0$, and $0=B(L,\ell)=\gamma B(\Omega,L)$ means that $B(\Omega,L)=0$. The details are simple but technical, and are left to the reader.
\end{proof}

\begin{lemma}
The group of isometries of an independent line $\ell$ is either isomorphic to the group of isometries of a dimension $2$ quadratic space, or to $\mathbb{K}^+\times\mathbb{F}_2^+$.
\end{lemma}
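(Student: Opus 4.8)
The plan is to realize the isometry group of $\ell$ as the pointwise stabilizer of the $4$-dimensional subspace $W:=\langle\Omega,P,L,\ell\rangle$ (which is $4$-dimensional precisely because $\ell$ is independent). First I would check that an isometry fixing the projective points $\Omega,P,L,\ell$ in fact fixes the corresponding vectors. Since $Q(\Omega)\neq 0$, any scalar on $\Omega$ must square to $1$, hence equal $1$; assumption (2) (giving $Q(P)\neq 0$ or $B(\Omega,P)\neq 0$, and likewise for $L$) together with the non-ideality condition $B(P,\ell)\neq 0$ then successively pins down $P$, $\ell$, and $L$ as genuine vectors. Thus the group of isometries of $\ell$ coincides with the group of isometries of $(V,Q)$ fixing $W$ pointwise.

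Next I would record the Gram matrix of $B$ in the basis $\Omega,P,L,\ell$, using $Q(\ell)=0$, $B(L,\ell)=B(\Omega,\ell)=B(P,L)=0$, and the data $c:=B(P,\ell)\neq 0$, $a:=B(\Omega,P)$, $b:=B(\Omega,L)$. A short determinant computation yields $\det=b^2c^2$, so $W$ is non-degenerate exactly when $b\neq 0$; this single dichotomy produces the two alternatives. In the first case $b\neq 0$, so $W$ is non-degenerate and $V=W\perp W^\perp$ with $\dim W^\perp=2$. Any isometry fixing $W$ pointwise preserves $W^\perp$ and restricts to an isometry there, while conversely any isometry of $W^\perp$ extends by the identity on $W$; hence the group is isomorphic to $\Iso(W^\perp)$, the isometry group of a $2$-dimensional quadratic space.

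The substantive case is $b=0$. Here $W$ has $2$-dimensional radical $R=W\cap W^\perp=W^\perp$, spanned by $L$ and $\Omega+(a/c)\ell$, on which $B$ vanishes while $Q$ takes the values $q_1:=Q(L)$ and $q_2:=Q(\Omega)$; both are nonzero, since $Q(\Omega)\neq 0$ always and, once $B(\Omega,L)=0$ forces $\Arf\langle\Omega,L\rangle=\infty$, assumption (2) forces $Q(L)\neq 0$. I would split off the non-degenerate plane $W_1:=\langle P,\ell\rangle$ and pass to $U:=W_1^\perp$, a $4$-dimensional non-degenerate space in which $R$ is Lagrangian ($R^{\perp_U}=R$). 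Fixing $W$ pointwise is then equivalent to fixing $R$ pointwise inside $U$; and since $\varphi(v)-v\in R^{\perp_U}=R$ for every $v$, choosing a dual basis $s_1,s_2$ encodes each such $\varphi$ by a matrix $(\alpha_{ij})$ with $\varphi(s_i)=s_i+\sum_k\alpha_{ki}r_k$. Preservation of $B$ forces $(\alpha_{ij})$ symmetric (set $\beta:=\alpha_{12}=\alpha_{21}$), and preservation of $Q$ imposes the two Artin--Schreier relations $\alpha_{11}+q_1\alpha_{11}^2=q_2\beta^2$ and $\alpha_{22}+q_2\alpha_{22}^2=q_1\beta^2$. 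The key structural observation is that composition of isometries corresponds to addition of these matrices, so the group is abelian of exponent $2$.

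It then remains to identify this elementary abelian group as $\mathbb{K}^+\times\mathbb{F}_2^+$, which I expect to be the main obstacle. I would fiber the group over the $\beta$-coordinate: its image is $S=\{\beta:q_1q_2\beta^2\in\mathfrak{H}(\mathbb{K})\}$, the preimage of $\mathfrak{H}(\mathbb{K})$ under the additive automorphism $\beta\mapsto q_1q_2\beta^2$, which by Lemma \ref{Lindex} is an index-$2$ subgroup of $\mathbb{K}^+$; the fiber over each $\beta\in S$ consists of exactly the $2\times 2$ solutions of the two relations, and the kernel ($\beta=0$) is $\{0,1/q_1\}\times\{0,1/q_2\}\cong\mathbb{F}_2\times\mathbb{F}_2$. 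The delicate points are verifying that each relation has exactly two solutions when $\beta\in S$ and none otherwise — this is exactly where the index-$2$ property of $\mathfrak{H}(\mathbb{K})$ enters — and then the bookkeeping that the resulting elementary abelian $2$-group has order $2\lvert\mathbb{K}\rvert$, hence is isomorphic to $\mathbb{K}^+\times\mathbb{F}_2^+$ rather than to $\mathbb{K}^+\times\mathbb{F}_2^+\times\mathbb{F}_2^+$.
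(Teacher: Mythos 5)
Your proof is correct, and its skeleton is the same as the paper's: identify the group with the pointwise stabilizer of the $4$-dimensional subspace $W=\langle\Omega,P,L,\ell\rangle$ (the paper's $V_0$), split according to whether $B|_W$ is degenerate, and in the degenerate case parametrize the stabilizer of the radical by a symmetric $2\times2$ matrix subject to two Artin--Schreier conditions (your relations $\alpha_{ii}+q_i\alpha_{ii}^2=q_{i'}\beta^2$ are exactly the paper's $\beta^2=(\alpha_i^2Q(e_i)+\alpha_i)/Q(e_{i'})$, and your symmetry constraint is the paper's $\beta_1=\beta_2$). The genuine differences lie in the two places where the paper is thinnest, and in both you do more work. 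First, you locate the radical explicitly (the Gram determinant $b^2c^2$ shows degeneracy means $B(\Omega,L)=0$, and the radical is $\langle L,\Omega+(a/c)\ell\rangle$) and verify that $Q$ is nonzero on it using $Q(\Omega)\ne0$ and assumption (2); the paper instead obtains $\dim\Ker B|_{V_0}=2$ by an abstract parity argument and then divides by $Q(e_{i'})$ without checking it is nonzero. Your check is not pedantry: a singular vector in the radical would change the group, so this step is essential. Second, your identification with $\mathbb{K}^+\times\mathbb{F}_2^+$ proceeds by noting the group is elementary abelian and counting its order as $4\cdot|\mathbb{K}|/2$, whereas the paper exhibits an explicit bijection $\varphi\mapsto(\alpha_1,\varepsilon)$ with $\varepsilon=\alpha_1Q(e_1)+\alpha_2Q(e_2)\in\{0,1\}$ and solves for $\alpha_2$ and $\beta$ uniquely. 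The counting route requires $\mathbb{K}$ finite (so that $\mathfrak{H}(\mathbb{K})$ has index $2$ and the order pins down the elementary abelian group); the paper's parametrization works over any perfect field of characteristic $2$. Since the paper's applications are to finite fields, this restriction is harmless, but you should state it.

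One caveat: you use $B(\Omega,\ell)=0$ and $B(P,\ell)\ne0$, i.e., you prove the lemma for real, non-ideal independent lines. That is the case the rest of the section uses, but neither hypothesis appears in the statement. Non-ideality is genuinely needed (for an ideal line, $\ell$ itself lies in $\Ker B|_W$ with $Q(\ell)=0$, and the conclusion changes); realness is needed in your computation because for $B(\Omega,\ell)=d\ne0$ the second radical generator becomes $\Omega+(d/c)P+(a/c)\ell$, whose norm $Q(\Omega)+(d/c)^2Q(P)+ad/c$ can vanish. This is a defect of the statement as written rather than of your argument, and the paper's own proof is silent on the same point; still, you should make the extra hypotheses explicit.
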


\begin{proof}
Let us consider the subspace $V_0=\langle\Omega,P,L,\ell\rangle$ that is of dimension $4$ by condition (1). If the restricted bilinear form $B|_{V_0}$ is non-degenerate, then the group of isometries of the line is isomorphic to the group of isometries of the subspace $V_0^\perp$, which is of dimension $2$, and is also a non-degenerate quadratic space.

If $B|_{V_0}$ is degenerate, we may show that $\dim\Ker B|_{V_0}=2$. Consider in general a non-degenerate quadratic space $V$ with a subspace $V_0$ and a non-zero vector $v\in\Ker B|_{V_0}$. Then we may find a $u\in V$ such that $B(u,v)=1$, since the quadratic space $V$ is non-degenerate. Then we may choose a subspace $V_0'$ of $V_0$ of codimension $1$ that is orthogonal to $\langle u,v\rangle$, embedded inside $\langle u,v\rangle^\perp$, a non-degenerate quadratic subspace $V'$ of $V$ of dimension $2$ less than $V$. Clearly, if $\dim V_0$ is even, then $\dim V_0'$ is odd, hence it is a degenerate space, and the original $\dim\Ker B|_{V_0}>1$. On the other hand, repeating this method recursively, we can see that $\dim\Ker B|_{V_0}\le \dim V-\dim V_0$, which in our initial case was $2$.

So we may assume that $\dim\Ker B|_{V_0}=2$ and choose $e_1$, $e_2\in V_0$ and $e^1$, $e^2\in V$ such that $B(e_i,e^i)=1$, and every other pair evaluates to $0$ under the bilinear form. Since $V_0$, $e^1$ and $e^2$ generate the whole space $V$, an isometry is determined by the image of $e^1$, $e^2$. Furthermore, denoting $U:=\langle e_1,e_2,e^1,e^2\rangle$, since $U^\perp\subseteq V_0$, their images are linear combinations of these four vectors.

Let us fix an isometry $\varphi$ of $U$ that fixes $e_i$ and $e_j$. Since $B(e_i,e^j)=B(e_i,\varphi(e^j))$, we have $\varphi(e^i)=e^i+\alpha_i e_i+\beta_i e_{i'}$ where $i'$ denotes the other index in the set $\{1,2\}$. Since $B(e^1,e^2)=B(\varphi(e^1),\varphi(e^2))$, this means that $\beta_1^2=\beta_2^2$, and since we're in characteristic $2$, $\beta=\beta_1=\beta_2$. Finally, $Q(e^i)=Q(\varphi(e^i))$, hence $\beta^2=\frac{\alpha_i^2Q(e_i)+\alpha_i}{Q(e_{i'})}$ for $i\in\{1,2\}$. This means that $\frac{\alpha_1^2Q(e_1)+\alpha_1}{Q(e_2)}=\frac{\alpha_2^2Q(e_2)+\alpha_2}{Q(e_1)}$, which when multiplied by $Q(e_1)Q(e_2)$ gives $\mathfrak{H}(\alpha_1Q(e_1))=\mathfrak{H}(\alpha_2Q(e_2))$, hence $\alpha_1Q(e_1)+\alpha_2Q(e_2)\in\{0,1\}$. By referring to this value as $\varepsilon$, the isometry is completely determined by the pair $(\alpha_1,\varepsilon)\in\mathbb{K}^+\times\mathbb{F}_2^+$, since $\beta$ may be chosen uniquely due to the field being perfect.

It is easy to see, for instance by looking at the image of $e_1$ under isometries, that the map $\varphi\to(\alpha_1,\varepsilon)$ is in fact a group isomorphism.
\end{proof}

\begin{lemma}
The group of translations via an independent, non-ideal line $\ell\perp L$ depends only on the values $\rho:=\frac{\Arf(L)}{\Arf(P)}$, $\Arf\langle\Omega,L\rangle$ given up to $\mathfrak{H}(\mathbb{K})$, and if $\rho=\infty$ on $\frac{Q(L)Q(P)}{B(\Omega,L)^2}B(\Omega,\ell)^2$, otherwise on $\frac{Q(P)}{B(\Omega,P)}B(\Omega,\ell)$. Furthermore, if $B(\Omega,\ell)=0$, it only depends on $\Arf(L)$ given up to $\mathfrak{H}(\mathbb{K})$.
\end{lemma}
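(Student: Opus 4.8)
The plan is to reduce the statement to a single Arf-invariant computation by leaning on the structural lemma just proved. Set $V_0:=\langle\Omega,P,L,\ell\rangle$, which is $4$-dimensional by independence of $\ell$. That lemma already identifies the group of translations via $\ell$ as $\Iso(V_0^\perp)$ when $B|_{V_0}$ is non-degenerate and as $\mathbb{K}^+\times\mathbb{F}_2^+$ when it is degenerate, so the first step is to decide degeneracy. Writing the Gram matrix of $B$ in the basis $\Omega,P,L,\ell$ and using $B(P,L)=B(L,\ell)=0$, its Pfaffian is $B(\Omega,L)\,B(P,\ell)$ up to sign, so
\[
\det B|_{V_0}=B(\Omega,L)^2\,B(P,\ell)^2 .
\]
As $\ell$ is non-ideal we have $B(P,\ell)\ne 0$, whence $V_0$ is degenerate exactly when $B(\Omega,L)=0$, i.e.\ when $\Arf(L)=\infty$. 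In that case the group is the constant $\mathbb{K}^+\times\mathbb{F}_2^+$, which I would record as the $\rho=\infty$ endpoint and set aside.

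For the non-degenerate case, $V_0^\perp$ is a $2$-dimensional non-degenerate quadratic space, so its Arf invariant is never $\infty$, and by the classification theorem its isomorphism type — hence $\Iso(V_0^\perp)$ — is determined by $\Arf(V_0^\perp)$ modulo $\mathfrak{H}(\mathbb{K})$. Since the geometry is $\Sigma\oplus\Sigma\oplus\Sigma$ we have $\Arf V=0$, so additivity gives $\Arf(V_0^\perp)=\Arf(V_0)$ and the whole problem becomes the evaluation of $\Arf(V_0)\bmod\mathfrak{H}(\mathbb{K})$. I would split off the non-degenerate plane $\langle\Omega,L\rangle$ (of Arf invariant $\Arf(L)$, since $B(\Omega,L)\ne0$) and compute the Arf invariant of its complement $W$ in $V_0$ on the explicit basis $B(\Omega,L)P+B(\Omega,P)L$, $B(\Omega,\ell)L+B(\Omega,L)\ell$. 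A direct evaluation of the three defining inner products gives
\[
\Arf(V_0)=\Arf(L)+\kappa^2\,\Arf(L)\bigl(\Arf(P)+\Arf(L)\bigr),\qquad\kappa:=\frac{B(\Omega,P)\,B(\Omega,\ell)}{B(P,\ell)} .
\]

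It then remains to recognise this correction in terms of the advertised data. Using $\Arf(L)=\rho\,\Arf(P)$ and $\rho+\rho^2=\mathfrak{H}(\rho)$, and setting $m:=\tfrac{Q(P)}{B(\Omega,P)}B(\Omega,\ell)$ (so that $m=\Arf(P)\kappa$ once $\ell$ is scaled so that $B(P,\ell)=1$), the formula collapses to $\Arf(V_0)=\Arf(L)+m^{2}\,\mathfrak{H}(\rho)$, exhibiting the class $\Arf(V_0)\bmod\mathfrak{H}(\mathbb{K})$ as a function of $\rho$, of $\Arf(L)\bmod\mathfrak{H}(\mathbb{K})$ and of $m$ alone. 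The case $B(\Omega,\ell)=0$ is then immediate, since $\kappa=m=0$ forces $\Arf(V_0)=\Arf(L)$ and only $\Arf(L)\bmod\mathfrak{H}(\mathbb{K})$ survives. In the $\rho=\infty$ regime the factor $m$ degenerates, so I would instead read off the surviving correction directly from the displayed formula; it is the appropriate specialisation, which is what the alternative parameter $\tfrac{Q(L)Q(P)}{B(\Omega,L)^2}B(\Omega,\ell)^2$ in the statement records.

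The computation of the Gram matrix and of $\Arf(W)$ is routine; the real work is the bookkeeping modulo $\mathfrak{H}(\mathbb{K})$. Because the Arf invariant lives in $\mathbb{K}^+/\mathfrak{H}(\mathbb{K})$, I must decide exactly when two values of the correction $m^2\mathfrak{H}(\rho)$ yield the same class, and this is governed by whether $\rho\,\mathfrak{H}(\mathbb{K})=\mathfrak{H}(\mathbb{K})$. This is precisely the dichotomy of Lemma \ref{Lindex}: for $\rho=1$ the translates coincide and $\Arf(L)\bmod\mathfrak{H}(\mathbb{K})$ is a genuine invariant, whereas for $\rho\notin\{0,1,\infty\}$ the subgroups $\rho\,\mathfrak{H}(\mathbb{K})$ and $\mathfrak{H}(\mathbb{K})$ together generate $\mathbb{K}^+$, which collapses the residual freedom. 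Aligning these invocations of Lemma \ref{Lindex} with the three parameter regimes of the statement, and pinning down the normalisation of $\ell$ hidden in $\tfrac{Q(P)}{B(\Omega,P)}B(\Omega,\ell)$, is where I expect the main difficulty to lie.
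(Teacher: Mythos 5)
Your core computation is sound and, despite the different decomposition, lands essentially where the paper does. The paper normalises $B(P,\ell)=1$, splits off the hyperbolic plane $\langle\ell,P\rangle$ and evaluates $\Arf\langle L,\Omega'\rangle$ for the projection $\Omega'=\Omega+B(\Omega,\ell)P+B(\Omega,P)\ell$; you instead split off $\langle\Omega,L\rangle$ and evaluate the Arf invariant of its complement $W$ inside $V_0$. Both routes use $\Arf V=0$ to convert $\Arf(V_0^\perp)$ into $\Arf(V_0)$, and your answer $\Arf(L)+m^2\mathfrak{H}(\rho)$ and the paper's $\Arf(L)+\rho\,\mathfrak{H}(m)$ differ by $\mathfrak{H}(\rho m)$, hence agree in $\mathbb{K}^+/\mathfrak{H}(\mathbb{K})$, which is all that matters. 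Your Pfaffian criterion $\det B|_{V_0}=B(\Omega,L)^2B(P,\ell)^2$ is a useful explicit supplement that the paper leaves implicit. (Two small points: the identity $m=\Arf(P)\kappa$ silently uses the normalisation $Q(\Omega)=1$ established earlier in the paper, so state it; and your closing paragraph invoking Lemma \ref{Lindex} is not needed here, since the lemma only claims the group is a function of the listed data, not that the parametrisation is faithful.)

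The genuine gap is in the exceptional cases, which are the whole point of the lemma's case split. You attach the label $\rho=\infty$ to the degenerate situation $B(\Omega,L)=0$, i.e.\ $\Arf(L)=\infty$; but the alternative parameter $\frac{Q(L)Q(P)}{B(\Omega,L)^2}B(\Omega,\ell)^2$ has $B(\Omega,L)^2$ in its denominator, so it cannot be describing that case (there the translation group is the constant $\mathbb{K}^+\times\mathbb{F}_2^+$ and nothing needs parametrising). The configuration that actually requires the alternative parameter is $B(\Omega,P)=0$, where $m=\frac{Q(P)}{B(\Omega,P)}B(\Omega,\ell)$ is undefined; there your raw quantities give $\Arf(W)=\frac{Q(L)Q(P)}{B(\Omega,L)^2}B(\Omega,\ell)^2$ (with $B(P,\ell)=1$), matching the statement, but your collapsed expression $\Arf(L)+m^2\mathfrak{H}(\rho)$ is a $0\cdot\infty$ indeterminate form, so ``reading off the appropriate specialisation'' is not automatic and must be redone from $\Arf(W)=Q(u)Q(v)/B(u,v)^2$. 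Relatedly, in the case $Q(P)=0$ with $B(\Omega,P)\neq 0$ (which is what $\rho=\Arf(L)/\Arf(P)=\infty$ literally means) your formula leaves a nonzero correction $\kappa^2\Arf(L)^2$ that neither advertised parameter records; you must decide which degeneration the statement intends, a point the paper's own proof also passes over. Until the exceptional branches are pinned down and computed, the proof is incomplete exactly where the lemma's case distinction lives.
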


\begin{proof}
Since $P$, $L$, $\Omega$ and $\ell$ are fixed under such an isometry, the isometry group is completely determined by $\Arf\langle\ell,P,L,\Omega\rangle^\perp$ up to $\mathfrak{H}(\mathbb{K})$.

Let us consider an element $\ell\perp L$, $Q(\ell)=0$, $\ell\not\perp P$. Let us assume $B(P,\ell)=1$. Then $\langle \ell,P\rangle$ is a hyperbolic space, and hence $\Arf\langle\ell,P\rangle^\perp=0$. To extend $\{\ell, P\}$ into a basis, we need to project $\Omega$ and $L$ onto $\langle\ell,P\rangle^\perp$, the later of which is already inside due to assumption (0). Then $\Omega'=\Omega+B(\Omega,\ell)P+B(\Omega,P)\ell$ is the projection of $\Omega$.

Since $\Arf\langle\ell,P\rangle^\perp=0$, $\Arf\langle\ell,P,L,\Omega\rangle^\perp=\Arf\langle L,\Omega'\rangle$, which is
$${Q(L)Q(\Omega)\over B(\Omega,L)^2}+{Q(L)B(\Omega,P)\over B(\Omega,L)^2}B(\Omega,\ell)+{Q(L)Q(P)\over B(\Omega,L)^2}B(\Omega,\ell)^2$$
which is equal to
$$\Arf\langle\Omega,L\rangle+{Q(L)Q(P)\over B(\Omega,L)^2}B(\Omega,\ell)^2$$
if $\Arf\langle\Omega,P\rangle=0$, otherwise to
$${Q(L)Q(\Omega)\over B(\Omega,L)^2}
+{Q(L)B(\Omega,P)^2\over B(\Omega,L)^2Q(P)} \mathfrak{H}\left({Q(P)\over B(\Omega,P)}B(\Omega,\ell)\right)$$
or equivalently
$$\Arf\langle\Omega,L\rangle+{\Arf\langle\Omega,L\rangle\over\Arf\langle\Omega,P\rangle}\mathfrak{H}\left({Q(P)\over B(\Omega,P)}B(\Omega,\ell)\right).$$
The isometry group depends on this value, up to $\mathfrak{H}(\mathbb{K})$. When $B(\Omega,\ell)=0$, this is simply $\Arf\langle\Omega,L\rangle$.
\end{proof}

The previous lemma suggests that we look at elements that are orthogonal to $\Omega$ separately.

\begin{definition}
A \textbf{real hypercycle} (or \textbf{real point}) is a hypercycle (or point) $c\in V$ such that $c\perp\Omega$. A \textbf{virtual hypercycle} (or \textbf{virtual point}) is a hypercycle (or point) that is not real.\footnote{These definitions are not entirely compatible with those defined in \cite{juhasz2016univ}. However, they fulfill similar roles, pertaining to the existence of solutions of quadratic polynomials.}
\end{definition}

Note that this is the first definition that uses $\Omega$ directly, and thus the choice of real and virtual hypercycles distinguishes between otherwise transformation equivalent spaces.

\begin{lemma}
Given a cycle $c\in V$,
the group of isometries that fix this cycle acts transitively on those non-ideal points $p$ where ${B(\Omega,p)\over B(L,p)}$ is a fixed value, and $p$ is linearly independent from $\Omega$, $P$, $L$, $c$.
\end{lemma}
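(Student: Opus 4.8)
The plan is to prove transitivity by the same extend-an-isometry-of-a-subspace technique used in the preceding lemmas. Fix two admissible points $p_1$ and $p_2$: both non-ideal (so $B(L,p_i)\ne 0$ and the ratio is defined), both linearly independent from $\Omega,P,L,c$, and with the common value $r=B(\Omega,p_1)/B(L,p_1)=B(\Omega,p_2)/B(L,p_2)$. It suffices to produce an isometry of $V$ that fixes $\Omega,P,L$ and $c$ (each up to scalar) and sends $p_1$ to a scalar multiple of $p_2$. I would first define such a map on the five-dimensional subspace $W_i:=\langle\Omega,P,L,c,p_i\rangle$ (genuinely five-dimensional precisely by the independence hypothesis) and then extend it to all of $V$.

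I would look for a linear isomorphism $\psi\colon W_1\to W_2$ of the shape $\Omega\mapsto\Omega$, $P\mapsto P$, $L\mapsto L$, $c\mapsto\nu c$, $p_1\mapsto\lambda p_2$, and fix $\lambda,\nu$ so that $\psi$ preserves $Q$ and $B$. Since $Q(\Omega)=1$, $\Omega$ must be held rigidly; since $Q(p_i)=0$ and $B(P,p_i)=0$, the conditions coming from $Q(p_i)$ and from $P$ are automatic. The substantive equations are $B(\Omega,p_1)=\lambda B(\Omega,p_2)$, $B(L,p_1)=\lambda B(L,p_2)$ and $B(c,p_1)=\nu\lambda B(c,p_2)$. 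The first two are simultaneously solvable for a single $\lambda$ exactly because the ratio $r$ agrees on $p_1$ and $p_2$ — this is the whole purpose of fixing $r$ — determining $\lambda=B(L,p_1)/B(L,p_2)$. The remaining equation is then solved by the scalar $\nu$: because $c$ is a cycle, $Q(c)=0$ imposes no restriction on $\nu$, so I may take $\nu=B(c,p_1)/(\lambda B(c,p_2))$ when $B(c,p_2)\ne 0$, and $\nu=1$ when $B(c,p)$ vanishes.

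The delicate point, which I expect to be the crux, is the mutual compatibility of these scalars with the Gram relations among the frame vectors themselves. Preserving $B(f,c)$ for a frame vector $f$ forces $\nu$ (times the scalar placed on $f$) to equal $1$; so the freedom to take $\nu\ne 1$, which is what absorbs $B(c,p)$ and prevents $B(c,p)$ from becoming an independent orbit invariant, exists only because $c$ — and, where necessary, the remaining isotropic frame vectors — may themselves be rescaled, whereas any rigidly fixed anisotropic vector is neutralised by the fixed ratio $r$. I would therefore check, case by case according to which of $P,L,c$ are isotropic and which pairs among $\Omega,P,L,c$ are orthogonal, that these constraints are consistent, so that a valid choice of $\lambda,\nu$ (and of auxiliary scalars on $P,L$) always exists. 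This is precisely where both the isotropy $Q(c)=0$ and the independence of $p$ from the frame enter; granting it, $\psi$ is an isometry of $W_1$ onto $W_2$ fixing the frame.

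Finally I would extend $\psi$ to an isometry of all of $V$. When the restriction of $B$ to $W_i$ is non-degenerate this is immediate from the cited extension theorem (\cite{kitaoka1993}). The main remaining work lies in the degenerate case, where $W_i$ contains radical vectors and that theorem does not apply directly: there I would reproduce the kernel analysis from the earlier lemma on the isometry group of an independent line, locating $\Ker B|_{W_i}$, pairing its vectors with complementary vectors of $V$ on which $\psi$ can be prescribed, and using the perfectness of $\mathbb{K}$ to pin the extension down uniquely. The resulting isometry fixes $\Omega,P,L,c$ and carries $p_1$ to $\lambda p_2$, establishing transitivity.
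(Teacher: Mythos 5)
Your overall strategy is exactly the paper's: normalise so that the common ratio forces $B(\Omega,p_1)=B(\Omega,p_2)$ (equivalently, find the single scalar $\lambda$ reconciling the $\Omega$- and $L$-entries of the Gram data), conclude that the five-dimensional subspaces $\langle\Omega,P,L,c,p_i\rangle$ are isometric via a map fixing the frame and sending $p_1$ to $p_2$, and extend that isometry to $V$. The one place you diverge is instructive: the paper's proof takes $p_1$ and $p_2$ to lie \emph{on} the cycle $c$, so $B(c,p_i)=0$ and the Gram entry involving $c$ matches for free, whereas you allow $B(c,p_i)\ne 0$ and try to absorb it by rescaling $c$ with a factor $\nu$. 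That cannot work in general: whether $B(c,p)$ vanishes is itself an invariant of the group fixing $c$ up to scalar, and $\nu\ne 1$ is only available when $c$ is orthogonal to the frame vectors that are held rigidly. So the ``delicate point'' you flag is not a technicality to be checked case by case but a sign that the statement must be read, as the paper's proof implicitly does, as a claim about points incident to $c$ (or at least about points with matching incidence data with respect to $c$); with that reading your $\nu$ is simply $1$ and your argument collapses onto the paper's. Your second reservation, about extending the subspace isometry to all of $V$ when $B$ restricted to the five-dimensional subspace is degenerate --- which in characteristic $2$ it always is, the subspace being odd-dimensional --- is legitimate; the paper merely asserts that the extension exists, so you are if anything more careful here, though you too leave that step unfinished.
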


\begin{proof}
For a non-ideal point $p$, we have $B(L,p)\ne 0$, and by rescaling we may assume that $B(L,p)=1$. Note that this does not change the ratio ${B(\Omega,p)\over B(L,p)}$ which is fixed up to scalar multiples. In fact, if $B(L,p)=1$, it becomes equal to $B(\Omega,p_1)=B(\Omega,p_2)$.

Assume given two non-ideal points $p_1$ and $p_2$ on the cycle $c$ with the above condition and scaling, thus having $B(\Omega,p_1)=B(\Omega,p_2)$. Then the subspaces $\langle\Omega,P,L,c,p_i\rangle$ are isometric for $i\in\{1,2\}$. Therefore there is an isometry between these two subspaces that sends $p_1$ to $p_2$ and fixes the others, and this extends into an isometry of $V$.
\end{proof}

The above lemma shows that just as for \cite{juhasz2016univ}, it is possible to define translations through lines for most pairs of points. To define distances, we need to talk about orientation, which is generally not well-defined in characteristic $2$, since $1=-1$. However, the arising groups have some very useful properties.

\begin{definition}
Let us denote by $\Ort(\alpha)$ the group of isometries of a quadratic space of dimension $2$ whose Arf invariant is $\alpha$, and let $\Ort(\infty)$ be $\mathbb{K}^+\times\mathbb{F}_2^+$.
\end{definition}

In particular, $\Ort(0)$ is the group of isometries of the hyperbolic space.

\begin{lemma}
For any $\alpha\in\mathbb{K}\cup\{\infty\}$, there is an index $2$ subgroup $\Ort(\alpha)^+$ of $\Ort(\alpha)$ that acts transitively on the set of points of a certain non-zero norm.
\end{lemma}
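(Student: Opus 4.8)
The plan is to handle the two cases of $\Ort(\alpha)$ separately, according to whether $\alpha\in\mathbb{K}$ (so $\Ort(\alpha)$ is the isometry group of a nondegenerate $2$-dimensional quadratic space with finite Arf invariant) or $\alpha=\infty$ (so $\Ort(\alpha)=\mathbb{K}^+\times\mathbb{F}_2^+$ by definition). In both cases I would first exhibit a natural surjective group homomorphism $\Ort(\alpha)\to\mathbb{F}_2$ and define $\Ort(\alpha)^+$ as its kernel, which is automatically of index $2$ (provided the homomorphism is onto, which must be checked), and then verify that this kernel acts transitively on the vectors of a suitably chosen norm.

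For the finite case $\alpha\in\mathbb{K}$, I would fix a basis $e_1,e_2$ of the $2$-dimensional space with $B(e_1,e_2)=1$ and $Q(e_1)Q(e_2)=\alpha$. An isometry is determined by its matrix, and I would compute directly which $2\times 2$ matrices preserve $Q$; this gives a one- or two-parameter family. The determinant, or equivalently the action on the two isotropic directions (when $\alpha=0$) or on a pair of conjugate anisotropic directions, furnishes the homomorphism to $\mathbb{F}_2$: intuitively, isometries either preserve or swap the two "ends," and $\Ort(\alpha)^+$ is the stabilizer of each end. Concretely, writing the isometry as acting on the coordinates, one sees the full group has order $2(q\pm 1)$ (the $+1$ or $-1$ depending on whether $\alpha=0$ or the anisotropic case), and the rotation subgroup of index $2$ is cyclic of order $q\pm 1$. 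Transitivity of this subgroup on a fixed-norm sphere then follows by a counting argument: the sphere $\{v : Q(v)=c\}$ for suitable nonzero $c$ has exactly $q\pm 1$ points, the rotation subgroup has the same order and acts freely, so the action is simply transitive.

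For the degenerate case $\alpha=\infty$, I would use the explicit model $\mathbb{K}^+\times\mathbb{F}_2^+$ established in the earlier lemma, where an isometry corresponds to a pair $(\alpha_1,\varepsilon)$. Here the projection onto the second factor $\mathbb{F}_2^+$ is the desired homomorphism, and $\Ort(\infty)^+=\mathbb{K}^+\times\{0\}$ is its index $2$ kernel, isomorphic to the additive group $\mathbb{K}^+$. I would then identify the relevant "sphere" of nonzero norm inside the underlying space and check that the $\mathbb{K}^+$-action (which is by the unipotent translations $e^i\mapsto e^i+\alpha_i e_i+\beta e_{i'}$ from the earlier computation) moves a chosen norm-$c$ vector through all $q$ vectors of that norm, again by comparing cardinalities and verifying freeness.

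The main obstacle I anticipate is pinning down the correct "certain non-zero norm" in each case so that the fixed-norm set has exactly the cardinality of $\Ort(\alpha)^+$, making the transitive action simply transitive and hence easy to verify; the anisotropic subcase ($\alpha\neq 0,\infty$) is the delicate one, since there the rotation subgroup is tied to the norm-one elements of the quadratic extension $\mathbb{K}(\sqrt[•]{\,})$ cut out by $x^2+x+\alpha$, and I would need the perfectness of $\mathbb{K}$ together with Lemma~\ref{Lindex} to guarantee that such a norm class is nonempty and that the homomorphism to $\mathbb{F}_2$ is genuinely surjective rather than trivial.
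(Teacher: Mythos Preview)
Your outline is workable but differs from the paper's argument in one essential point. For $\alpha\in\mathbb{K}$ you propose to obtain the map $\Ort(\alpha)\to\mathbb{F}_2$ from the determinant; this fails in characteristic $2$, because $M^TBM=B$ with $\det B\ne 0$ forces $(\det M)^2=1$, hence $\det M=1$ for every isometry. Your fallback, the action on the two isotropic (or, in the anisotropic case, $\mathbb{K}[x]/(x^2+x+\alpha)$-rational) directions, does produce the desired homomorphism, but it obliges you to pass to the quadratic extension when $\alpha\notin\{0,\infty\}$ and to argue separately in each case.

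The paper avoids this bifurcation by constructing, uniformly for all $\alpha\in\mathbb{K}$, what is essentially the Dickson invariant: choose any bilinear $A$ with $Q(x)=A(x,x)$, observe that $A(Mx,My)-A(x,y)$ is alternating and hence equals $\lambda_M B$ for a unique $\lambda_M\in\mathbb{K}$, and then show from a determinant identity that $\lambda_M\in\{0,1\}$. The kernel $\{\lambda_M=0\}$ is the index-$2$ subgroup. This is cleaner precisely because it never mentions isotropic lines or field extensions. On the other hand, your counting argument for transitivity (matching $|\Ort(\alpha)^+|$ against the size of a norm sphere) is more explicit than what the paper actually writes; the paper asserts the index-$2$ subgroup exists and says little more about the transitive action, so on that point your plan is in fact more complete.
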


\begin{proof}
When $\alpha=\infty$, $\Ort(\alpha)$ is isomorphic to $\mathbb{K}^+\times\mathbb{F}_2^+$, in which there is clearly a unique index $2$ subgroup, isomorphic to $\mathbb{K}^+$ (it is unique since $\mathbb{K}^+$ is of odd order).

Let us fix a (not necessarily symmetric) bilinear form $A$ such that $Q(x)=A(x,x)$. Such a form always exists. Let $M$ be an isometry, that is, $Q(M(x))=Q(x)$. Then $A(M(x),M(y))=A(x,y)+B_M(x,y)$ for some anti-symmetric bilinear form $B_M$, i. e. $B_M(x,x)=0$. In characteristic $2$, every anti-symmetric bilinear form is a symmetric bilinear form as well.

Since the dimension of the space is $2$, the dimension of the space of symmetric bilinear forms is $1$, hence $B_M=\lambda_MB$ for some $\lambda_M\in\mathbb{K}$ dependent on $M$, where $B$ is the associated bilinear form to $Q$.

Written in matrix form, we get $M^TAM=A+\lambda_MB$. Let us assume a basis where $B=\left[\begin{matrix}0& 1\cr 1& 0\end{matrix}\right]$. Taking the determinant of both sides, we get $\det M^2\det A$ on the left, and $\det A+\lambda_M\cotr A+\lambda_M^2$ where $\cotr A$ is the sum of the elements in the co-diagonal. Since $M^TBM=B$ and $\det B\ne 0$, we get $\det M^2=1$. Since $B(x,y)=A(x,y)+A(y,x)$, $\cotr A=1$. Therefore we get $\lambda_M+\lambda_M^2=1$, whence $\lambda_M\in\{0,1\}$.

Let us denote by $\Ort(\alpha)^+$ the set of matrices $M$ where $\lambda_M=0$. It is fairly easy to see that this is in fact a subgroup of index $2$.
\end{proof}

Therefore we may define the distance in the following manner.

\begin{definition}
Given two non-ideal real points $p_1$ and $p_2$ on a non-ideal real line $\ell$, the \textbf{oriented distance} of $p_1$ and $p_2$ is the unique element of the group of isometries fixing $\ell$, a group isomorphic to $\Ort(\Arf\langle\Omega,L\rangle)$, that sends $p_1$ to $p_2$. The oriented distance is of $p_2$ and $p_1$ is the group theoretic inverse of the oriented distance of $p_1$ and $p_2$. The \textbf{distance} of two points $p_1$ and $p_2$ is then the equivalence class of the element quotiented by the group inversion, in $\Ort(\Arf\langle\Omega,L\rangle)/\{\gamma\sim\gamma^{-1}\}$.
\end{definition}

In a finite field of characteristic $2$, the group $\mathbb{K}^+/\mathfrak{H}(\mathbb{K})$ contains two elements, represented by $0$ and $\mathbf{e}$.
Therefore, apart from $\Ort(\infty)$, there are only two orthogonal groups: $\Ort(0)$, the isometries of the hyperbolic space, and $\Ort(\mathbf{e})$. Note that the quadratic space corresponding to $\Ort(\mathbf{e})$, with quadratic form $x^2+xy+\mathbf{e}y^2$, is similar to the elliptic case in other finite fields, in particular that all orbits, apart from $\{0\}$, contain $|\mathbb{K}|+1$ points.

We get the following classification for uniform geometries in finite fields of characteristic $2$, using the notation $\Arf(x):={Q(x)Q(\Omega)\over B(x,\Omega)^2}$:

\begin{center}
\begin{tabular}{|c|c|c|c|}
\hline
\diagbox{$\Arf(P)$}{$\Arf(L)$}&	$\mathbf{e}$&	$\infty$&	$0$\\
\hline
$\mathbf{e}$&	elliptic&			parabolic&		hyperbolic\\
\hline
$\infty$&	dual parabolic&			Laguerre/Galilei&	dual Minkowski\\
\hline
$0$&		dual hyperbolic&		Minkowski&		anti-de Sitter\\
\hline
\end{tabular}
\end{center}

\medskip

Note the similarity to finite fields of other characteristics, where $\mathbf{e}$ would denote a quadratic non-residue (see \cite{juhasz2016univ}):

\begin{center}
\begin{tabular}{|c|c|c|c|}
\hline
\diagbox{$Q(P)$}{$Q(L)$}&	$\mathbf{e}$&	$0$&	$1$\\
\hline
$\mathbf{e}$&	elliptic&			parabolic&		hyperbolic\\
\hline
$0$&		dual parabolic&			Laguerre/Galilei&	dual Minkowski\\
\hline
$1$&		dual hyperbolic&		Minkowski&		anti-de Sitter\\
\hline
\end{tabular}
\end{center}

Note that unlike in the characteristic non-$2$ case, the values $\Arf(P)$ and $\Arf(L)$ given up to $\mathfrak{H}(\mathbb{K})$ do not define the geometry completely.

\section*{Acknowledgement}

The author's research was supported by the grant EFOP-3.6.1-16-2016-00001 (``Complex improvement of research capacities and services at Eszterhazy Karoly University'').

I would like to thank \'Akos G. Horv\'ath for encouraging me to write this article, and who provided me with advice in the preparation of this article.

\end{document}